\newtheorem{theorem}{Theorem}[section]
\newtheorem{corollary}{Corollary}[section]
\newtheorem{proposition}{Proposition}[section]
\newtheorem{lemma}[theorem]{Lemma}
\theoremstyle{remark}
\newtheorem{remark}{Remark}[section]
\newcommand{\ct}{c_{\parallel}}
\newcommand{\cn}{c_{\perp}}
\newcommand{\eit}{e_{i, \parallel}}
\newcommand{\ein}{e_{i, \perp}}
\newcommand{\et}{e_{\parallel}}
\newcommand{\en}{e_{\perp}}
\newcommand{\Xt}{\mathbf{X}}
\newcommand{\C}{\mathbf{C}}
\renewcommand*{\d}{\mathop{}\!\mathrm{d}}
\begin{document}

\title{The $N$-link model for slender rods in a viscous fluid: well-posedness and convergence to classical elastohydrodynamics equations}
\author[1,2]{François Alouges}
\author[1]{Aline Lefebvre-Lepot}
\author[3,4,5]{Jessie Levillain}
\author[6]{Clément Moreau}
\date{}

\affil[1]{Université Paris Saclay, Université Paris Cité, ENS Paris Saclay, CNRS, SSA, INSERM, Centre Borelli, F-91190, Gif-sur-Yvette, France}
\affil[2]{Institut Universitaire de France}
\affil[3]{CMAP, CNRS, École polytechnique, Institut Polytechnique de Paris, route de Saclay, 91120 Palaiseau, France}
\affil[4]{Centre National d'Etudes Spatiales, 31401 Toulouse, France}
\affil[5]{INSA Toulouse, 31400 Toulouse, France}
\affil[6]{Nantes Université, École Centrale Nantes, CNRS, LS2N, UMR 6004, F-44000 Nantes, France}

\maketitle

\begin{abstract}
Flexible fibers at the microscopic scale, such as flagella and cilia, play essential roles in biological and synthetic systems. The dynamics of these slender filaments in viscous flows involve intricate interactions between their mechanical properties and hydrodynamic drag. In this paper, considering a 1D, planar, inextensible Euler-Bernoulli rod in a viscous fluid modeled by Resistive Force Theory, we establish the existence and uniqueness of solutions for the $N$-link model, a mechanical model, designed to approximate the continuous filament with rigid segments. Then, we prove the convergence of the $N$-link model's solutions towards the solutions to classical elastohydrodynamics equations of a flexible slender rod. This provides an existence result for the limit model, comparable to those by Mori and Ohm [Nonlinearity, 2023], in a different functional context and with different methods. Due to its mechanical foundation, the discrete system satisfies an energy dissipation law, which serves as one of the main ingredients in our proofs. Our results provide mathematical validation for the discretization strategy that consists in approximating a continuous filament by the mechanical $N$-link model, which does not correspond to a classical approximation of the underlying PDE.
\end{abstract}

\textbf{Keywords:} swimming at low Reynolds number, inextensibility, filament elastohydrodynamics, $N$-link model, well-posedness, convergence.

\section{Introduction}\label{sec: intro}

Flexible fibers are ubiquitous in nature, particularly at microscopic scale, playing key roles as flagella and cilia for microbiological locomotion \cite{lauga_fluid_2020} and structural components of cell membranes, polymer chains \cite{broedersz2014modeling}, and micro-robotics \cite{bente2018biohybrid}. 

The dynamics of a slender filament in a fluid is governed by the coupling between mechanical properties of the deformable filament and hydrodynamic interactions between the filament and the fluid. In addition, internal or external effects such as gravity, magnetic field, or internal activity can produce a broad range of behaviors like undulating, twisting, or knotting. 

Casting this fluid-structure interaction problem as a set of equations requires various modeling assumptions \cite{du2019dynamics,lindner2015elastic}, going from a full 3D description of both filament and fluid to simplifications pertaining to the filament slenderness and the specificities of low-Reynolds number hydrodynamics. In the slender filament limit, for planar motion, and neglecting the effects of internal shear, elastic restoring torque is linearly related to local curvature and bending stiffness, according to Euler-Bernoulli beam theory \cite{antman_nonlinear_2005, tornberg2004simulating}.

For the treatment of hydrodynamics, Resistive Force Theory (RFT) \cite{gray1955propulsion}, developed in the 1950s and which approximates local drag as a linear anisotropic operator related to local velocity, remains to this day a prominently popular choice for micro-filament modelling and simulation, offering a simple approximation with satisfying accuracy \cite{yu2006experimental}. More complex and non-local models, often termed as slender body theory (SBT), and regularized Stokeslet methods typically provide higher accuracy at the price of more involved computations \cite{tornberg2004simulating,nazockdast2017fast}.

In this article, we focus on a 1D, planar, inextensible Euler-Bernoulli deformable rod in a viscous fluid modeled through RFT. 
It leads to a fourth-order nonlinear partial differential equation (PDE) system that is considered standard in the literature on filament elastohydrodynamics \cite{hines1978bend,antman_nonlinear_2005,lauga_fluid_2020}.

The nonlinear terms arising from the inextensibility constraint make the filament elastohydrodynamics equations notoriously tricky to solve numerically with reasonable levels of accuracy and computational efficiency \cite{gadelha2010nonlinear}. In parallel of classical PDE discretization methods, mechanical discrete models have been proposed, based on replacing the continuous elastic body with a collection of rigid parts connected by elastic junctions. Common examples include the $N$-beads formulation \cite{maxian2022hydrodynamics}, for which the filament is seen as a chain of spheres, and the $N$-link, for which it is seen as a chain of slender straight rods \cite{alouges_self-propulsion_2013}. It is worth noting that these models not only constitute approximations of a flexible fiber, but also faithfully describe the structure of a certain type of flexible micro-robots built as an assembly of magnetic parts \cite{alouges2015can,moreau2018asymptotic,pauer2021programmable}. 

The $N$-link model, considered in this article, relies on analytic integration of the hydrodynamic force density given by RFT, carried out on individual segments; in turn, the dynamics is reduced to a first-order differential-algebraic equation system. This powerful approach allows further modelling refinements on dealing with hydrodynamics \cite{walker2020efficient} and obstacles \cite{elgeti2016microswimmers,spagnolie2023swimming}. 

Whether it is on the PDE formulation or its discrete $N$-link approximation, the mathematical analysis of elastohydrodynamics is relatively scarce, with the first well-posedness result in the continuous case (existence and uniqueness of the solution for a given initial data) having been stated only recently by Mori \& Ohm \cite{mori_well-posedness_2023}. Relying on the Banach fixed-point theorem, the authors establish global existence of solutions for small initial data and local existence of solutions for arbitrary initial data. On the other hand, justification of existence and uniqueness of the solutions of the $N$-link system is currently lacking. 

Furthermore, the approximation of a flexible fiber by a collection of small rigid segments has reasonable physical grounds and we observe numerical convergence towards the continuous model \cite{moreau2018asymptotic}. However, no formal proof of convergence as the number $N$ of links tends to infinity is available to the best of our knowledge. 

The objective of the present paper is to address both of these questions. Hence, we establish the well-posedness of the $N$-link equations (Theorem \ref{thm: discrete well-posedness}), and the convergence (up to extraction of a subsequence) of this solution in suitable functional spaces towards the solution of the continuous elastohydrodynamics equations (Theorem \ref{thm: convergence}). 
Considered together, Theorems \ref{thm: discrete well-posedness} and \ref{thm: convergence} imply the global existence of solutions for the elastohydrodynamic PDE (Corollary \ref{corollary}).

Both proofs rely on classical arguments. Theorem \ref{thm: discrete well-posedness} for free boundary conditions is an application of the Cauchy-Lipschitz theorem, which requires verifying that the hydrodynamic resistance matrix is invertible, and the use of an energy dissipation estimate (Proposition \ref{thm: energy}) to ensure boundedness of the solutions. Extension to other standard boundary conditions is also discussed. For Theorem \ref{thm: convergence}, in order to connect the finite- and infinite-dimensional variables of both systems, we define piecewise-constant, and continuous-piecewise-affine interpolates of the $N$-link system variables (position, orientation, forces and moments). We derive uniform bounds in $N$ for each of them, again mostly relying on energy dissipation, and prove convergence using compactness methods. 

The paper is structured as follows. In Section \ref{sec: formulation}, we describe the continuous model, the coarse-grained $N$-link model, and state the two main results. Section \ref{sec: energy} is dedicated to establishing the energy dissipation formula. The proofs of Theorems \ref{thm: discrete well-posedness}, \ref{thm: convergence} and Corollary \ref{corollary} are presented Section \ref{sec: proofs}. Finally, we discuss a list of possible model extensions and a few open problems in Section \ref{sec: discussion}.

\begin{figure}[ht!]
    \centering
    \includegraphics[width=\textwidth]{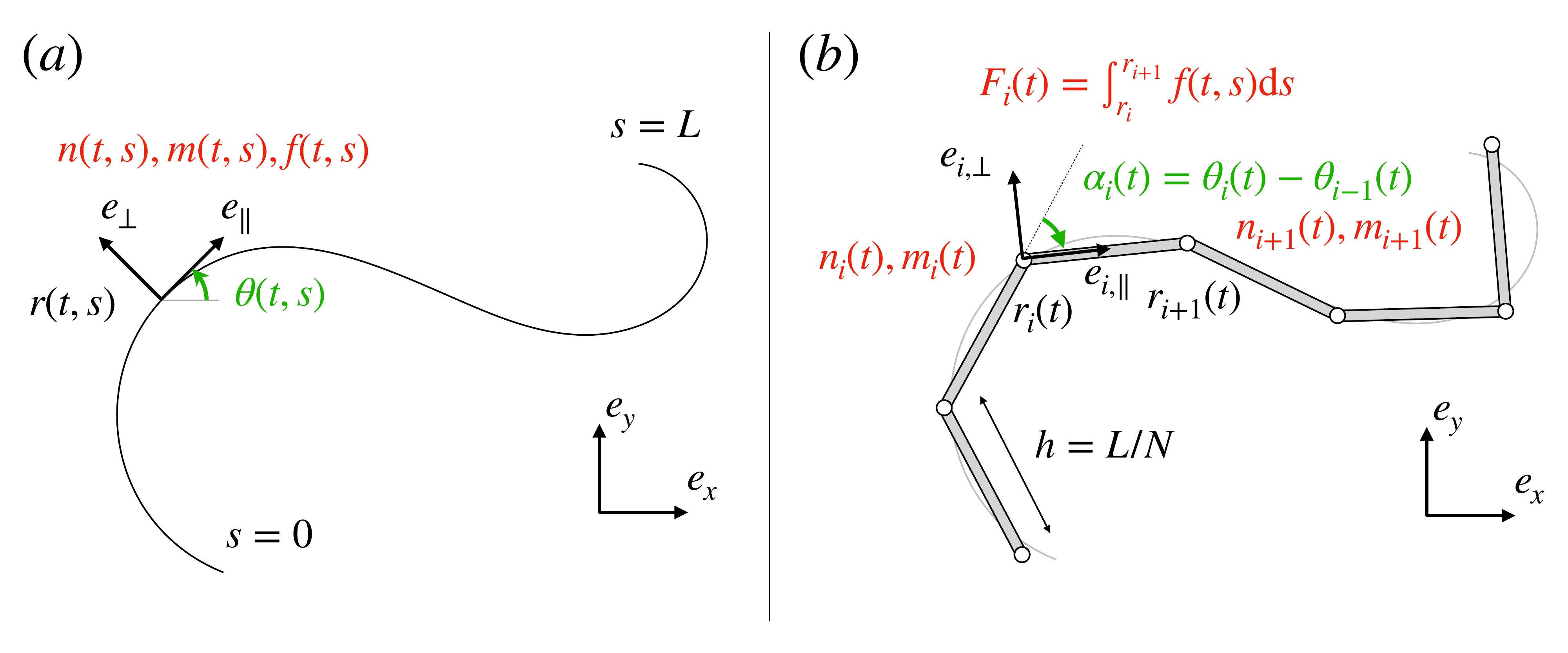}
    \caption{Diagram and notations for (a) the continuous elastohydrodynamic flagellum model and (b) the $N$-link model.}
    \label{fig:Nlink}
\end{figure}

\section{Problem formulation and main results}\label{sec: formulation}

\subsection{Continuous model} \label{sec: continuous model}

We consider a filament of length $L>0$ undergoing planar deformations in a fluid at a low Reynolds number, parametrized by 
\[
r:\left\{\begin{array}{ll}\mathbb{R}_+\times [0,L] \to \mathbb{R}^2 \subset \mathbb{R}^3\\ (t,s) \mapsto r(t,s)\end{array}
    \right.
\]
where $s$ is the filament arclength and $t$ the time, as shown in Figure \ref{fig:Nlink}(a). The filament is assumed to be inextensible and unshearable. We call  $n(t,s)$ and $m(t,s)$ the contact forces and moments inside the filament,  and $f(t,s)$ the external force density due to the fluid. The filament dynamics is then classically governed by the following system of equations \cite{antman_nonlinear_2005}
\begin{align}
    \left\{\begin{array}{ll}
 & n_s +f = 0,\\
 & m_s + r_s \times n = 0, \\
 & m = E \theta_s e_z =  E r_{ss} \times r_s,\\ 
 & \vert r_s\vert ^2 = 1,
    \end{array}
    \right.
    \label{eq: continuous antman}
\end{align}
where $\theta(t,s)$ is defined as the angle between the $x$ axis and the tangent vector to the filament $\et$, with $\et = r_s(t,s) := \displaystyle \frac{\partial r}{\partial s}(t,s)$. Assumed to be differentiable, $\theta$ is uniquely determined on $[0,L]$ once a representative has been chosen at $s=0$. 
The first two equations of \eqref{eq: continuous antman} reflect the force and torque balance on the filament. 
The third equation is the constitutive materical equation linking contact moments $m$ to the angle $\theta$. Here, we use a neo-Hookean description and the bending moment is therefore linearly related to the local curvature with bending stiffness $E>0$. Finally, the last equation encapsulates the inextensibility constraint.

\begin{remark}
\label{remark: external torque}
    In Antman \cite{antman_nonlinear_2005}, an additional term $\ell$ appears in the second equation of System \eqref{eq: continuous antman}, accounting for external torque density, with a typical example of such effects being the torque induced by a magnetic field on a magnetized rod \cite{alouges2015can}. Here, no such effects are considered, and there is no torque due to hydrodynamic drag, because the motion is planar \cite{chwang1974hydromechanics,walker2020efficient}.   

\end{remark}

Using Resistive Force Theory \cite{gray1955propulsion}, the density of external fluid forces can be modeled as:
\begin{equation}
    f(t, s) = -\cn(\en \cdot \dot r)\en- \ct (\et \cdot \dot r)\et = \C (\theta(t,s)) \dot r,
    \label{eq: RFT}
\end{equation}
 where $\dot r (t,s)$ is defined as $\frac{\partial r}{\partial t}(t,s)$, $\et=(\cos\theta,\sin\theta)^\intercal$, $\en=(-\sin\theta,\cos\theta)^\intercal$. The parallel and perpendicular drag coefficients, respectively $\ct$ and $\cn$, are non-negative with $\ct \neq \cn$.  Finally,
\begin{equation*}
\C(\theta) = - \left[ \cn \begin{pmatrix}
    -\sin \theta \\ \cos \theta
\end{pmatrix} \begin{pmatrix}
    -\sin \theta \,,\, \cos \theta
\end{pmatrix}  + \ct \begin{pmatrix}
    \cos \theta \\ \sin \theta
\end{pmatrix} \begin{pmatrix}
    \cos \theta \,,\, \sin \theta
\end{pmatrix}\right]
   \label{eq: C}
\end{equation*}
 is a negative definite $2\times 2$ matrix, depending regularly on the variable $\theta$. In a slight abuse of notation, it will sometimes be convenient to rewrite $\C(\theta)$ as a function of $u=(\cos(\theta),\sin(\theta))^\intercal$:
\begin{equation*}
    \mathbf{C}(u) = -\left(\cn I + (\ct-\cn)u u^\intercal\right).
    \label{eq: Cu}
\end{equation*}

To complete the description of the model, it remains to set boundary conditions at $s=0$ and $s=L$ for any time $t>0$. At $s=L$, we assume a free end, leading to Dirichlet boundary conditions for $n$ and $m$: 
\begin{equation}
    n(t,L)=0, \quad m(t,L)=0.
    \label{eq: BC distal}
\end{equation}
At $s=0$, we investigate three possible cases:
\begin{itemize}
    \item \textit{free} end, identical to the other end:  
    \begin{equation}
        n(t, 0)=0, \quad m(t, 0)=0,
        \label{eq: BC proximal free}
    \end{equation}
    \item \textit{pinned} end, for which the position of the filament at $s=0$ is fixed in space, but its orientation $r_s(t,0)$ is free to change:
    \begin{equation}
        \dot r(t,0) = 0, \quad  m(t,0)=0,
        \label{eq:BC proximal pinned}
    \end{equation}
    \item \textit{clamped} end, for which the extremity has a fixed position and orientation:
    \begin{equation} 
        \dot r(t,0) = 0, \quad \dot  \theta(t, 0) = 0.
        \label{eq: BC proximal clamped}
        \end{equation}
\end{itemize}
We finally present the corresponding full elastohydrodynamics system of equations in the case of free boundary conditions at both ends as
\begin{align}
    \left\{\begin{array}{ll}
 & n_s + \C(r_s) \dot r = 0,\\
 & m_s + r_s \times n = 0, \\
 & n(t,0)=n(t,L)=0, \\
 & m(t,0)=m(t,L)=0, \\
 & m = E r_{ss} \times r_s = E \theta_s e_z ,\\ 
 & \vert r_s\vert ^2 = 1.
    \end{array}
    \right.
    \label{eq: continuous}
\end{align}
Extension to pinned and clamped boundary conditions can be obtained simply by changing the third and fourth lines and straightforwardly adapting the proofs. Notice that this system has to be supplied with an initial condition $r^0$ for $r$ at $t=0$.

Existence and uniqueness of solutions to System \eqref{eq: continuous} has recently been established by Mori and Ohm \cite[Theorem 1.1]{mori_well-posedness_2023}, although in a different formulation, based on the curvature variable $\kappa = r_{ss}$, and involving different functional spaces than the ones we use in this paper. We provide a detailed discussion on this matter at the end of this section.

\subsection{{$N$-link model}}\label{sec: defNlink}

We now introduce the discrete filament model, which can be interpreted as a coarse-grained version of a continuous flexible filament. This model is often called the $N$-link system \cite{moreau2018asymptotic, alouges_self-propulsion_2013} in the context of microscopic locomotion.   The $N$-link filament, represented in Figure \ref{fig:Nlink}(b), is composed of $N$ rigid segments (or ``links'') of size $h =  L/N$ connected by torsional springs and surrounded by a fluid at zero Reynolds number.  The extremities of the $i$-th segment, for $1 \leq i \leq N$, are at positions $r_i(t)$ and $r_{i+1}(t)$, with $(r_i(t))_{1\leq i\leq N+1}$ given in $\mathbb{R}^2\subset\mathbb{R}^3$. From this, the position of the point of arclength $s$ on the discrete filament can be expressed as
\begin{equation}
r^h(t,s) = r_i(t) + \Big(s-(i-1)h\Big)\left(\frac{r_{i+1}(t) -r_i(t)}{h}\right),
\label{eq: interp_rh}
\end{equation}
for $s\in L_i=[(i-1)h, ih]$ and $1\leq i\leq N$, which is the linear interpolate between $r_i(t)$ and $r_{i+1}(t)$ on the $i$-th segment.

Due to the inextensibility condition we have $|r^h_s| = 1$ which translates into
\begin{equation}
\displaystyle \frac{\vert r_{i+1}(t) - r_i(t) \vert}{h} = 1,\quad 1 \leq i \leq N\,.
\label{eq: inext}
\end{equation}
We can then define the unit vector parallel to the $i$-th segment as 
\begin{equation}
\eit(t) = \displaystyle \frac{r_{i+1}(t)-r_i(t)}{h} = \begin{pmatrix}
    \cos \theta_i(t) \\ \sin \theta_i(t)
\end{pmatrix}, 1\leq i \leq N
\label{eq:eit}
\end{equation}
where we call, for each link, $\theta_i$ an angle between $e_x$ and $\eit$, defined modulo $2 \pi$.
The choice of the representative $\theta_i$ is immaterial for this geometric description but matters later when introducing the elastic torques. In fact, once a choice $(\theta_i^0)_{1\leq i\leq N}$ of $(\theta_i)_{1\leq i\leq N}$ is made at $t=0$, its representative at all times will be uniquely determined by dynamics. This is a fundamental difference between the discrete and continuous systems that is further discussed in Remark~\ref{todo}.

For $2\leq i \leq N$,  we denote by $n_i(t)$,  $m_i(t)$ the contact force and moment exerted by $L_i$ on $L_{i-1}$. The forces and moments at both extremities $n_1,\, m_1,\, n_{N+1},\, m_{N+1}$ are given by the boundary conditions, which mirror those used for the continuous filament (Eqs. \eqref{eq: BC distal}-\eqref{eq: BC proximal clamped}): the extremity of the $N$-th segment is left free, which translates to $m_{N+1}(t)=0$ and $n_{N+1}(t)=0$, while for the first segment, we consider the following three cases:

\begin{itemize}
    \item \textit{free} end:  
    \begin{equation*}
        n_1(t)=0, \quad m_1(t)=0;
        \label{eq: BC proximal free disc}
    \end{equation*}
    \item \textit{pinned} end:
    \begin{equation*}
        \dot r_1(t) = 0, \quad m_1(t)=0,
        \label{eq:BC proximal pinned disc}
    \end{equation*}
    leaving $n_1(t)$ unknown;
    \item \textit{clamped} end:
    \begin{equation*} 
        \dot r_1(t) = 0, \quad \dot \theta_1(t)=0,
        \label{eq: BC proximal clamped disc}
        \end{equation*}
        leaving both $n_1(t)$ and $m_1(t)$ unknown.
\end{itemize}

Then, denoting by $f_{\mathrm{ext},i}(t)$ the drag force exerted by the fluid on the $i$-th segment and using Equation \eqref{eq: RFT}, one has

\begin{eqnarray}
   f_{\mathrm{ext},i}(t) &=& \displaystyle\int_{L_i} \C(\theta_i(t))\,\, \dot r^h(t,s)\,\,\d s\nonumber\\
   &=& h\,\, \C(\theta_i(t))\,\,\dot r_{i+\frac12}(t)\,,
    \nonumber
\end{eqnarray}
where $ r_{i+1/2} = \displaystyle \frac{r_i + r_{i+1}}{2}$. 
The force balance on the $i$-th segment is therefore given by 
\begin{equation}
  h\, \C(\theta_i(t))\,\dot r_{i+\frac12}(t) + n_{i+1}(t) - n_i(t) = 0, \quad 1\leq i \leq N.
  \label{eq: Force balance}
\end{equation}

Similarly, the drag torque from the fluid, with respect to the origin of the reference frame, can be computed as (omitting the dependence in time for brevity)
\begin{equation}
    \begin{array}{lll}
        m_{\mathrm{ext}, i} & \displaystyle =  \int_{L_i} r^h \times \C(\theta_i)\,\dot r^h\,\d s\\
        &\displaystyle = \int_{L_i} (r^h- r_{i+1/2}) \times \C(\theta_i)\,(\dot r^h - \dot r_{i+1/2})\,\d s + h \,r_{i+1/2}\times \C(\theta_i)\, \dot r_{i+1/2}\\
        & \displaystyle = \int_{L_i} (r_{i+1}-r_i) \times \C(\theta_i)\,(\dot r_{i+1} - \dot r_{i})\left(\frac{1}{2}+\frac{s-(i-1)h}{h}\right)\,\d s + h\, r_{i+1/2}\times \C(\theta_i)\, \dot r_{i+1/2}\\
    & \displaystyle = - \frac{h^3}{12}c_{\perp} \dot \theta_i e_z + h \,r_{i+1/2}\times \C(\theta_i)\, \dot r_{i+1/2},
    \end{array}
    \nonumber
\end{equation}
and the torque balance equation on the $i-$th segment 
then reads
\[
    - \frac{h^3}{12}c_{\perp} \dot \theta_i e_z + h\, r_{i+1/2}\times \C(\theta_i)\, \dot r_{i+1/2} + m_{i+1}-m_i + r_{i+
        1}\times n_{i+1}-r_i\times n_i = 0\,. 
\]
Using the force balance equation \eqref{eq: Force balance}, together with the definition of $\eit$, this simplifies to

\begin{equation}
    m_{i+1}-m_i + h\,\eit\times \frac{n_{i+1}+n_i}{2} - \frac{h^3}{12}c_{\perp}\dot \theta_i e_z = 0,\quad 1\leq i \leq N\,.
    \label{eq: Moment balance}
\end{equation}

Finally, we model the relation between the angles and the elastic torque as
\begin{equation}
   m_{i}(t) = \gamma (\theta_{i}(t) - \theta_{i-1}(t))e_z ,\quad 2 \leq i \leq N\,,
    \label{eq: m}
\end{equation}
where the stiffness of the elastic junctions is defined as $\gamma = E/h$ in order to be consistent with the previous continuous model.

\begin{remark}\label{todo}

Here, there is a subtle but important difference between the continuous description and the $N$-link model in the way $\theta$ is deduced from $r$ at $t=0$.
Indeed, in the continuous case, the choice of $r(0,\cdot)$ naturally defines $\theta_s(0,\cdot)$, which in turn determines a unique differentiable $\theta(0,\cdot)$, provided that one representative $\theta(0,0)$ has been chosen. On the other hand, in the $N$-link model, the local representative of $(\theta_i)_i$ is \textit{not} uniquely determined from the knowledge of $(r_i)_{i}$ and $\theta_1$ at $t=0$. Therefore, a global choice of representatives at $t=0$ for each of the $(\theta_i)_i$ is compulsory. In particular, this choice of local representative matters in computing the local moment from \eqref{eq: m}. 

\end{remark}

Gathering Eqs. \eqref{eq: inext}, \eqref{eq: Force balance}, \eqref{eq: Moment balance} and \eqref{eq: m} with free boundary conditions, we finally obtain:
\begin{equation}
    \left\{\begin{array}{llll}
   n_{i+1} - n_i + h \,\C(\theta_i)\, \dot r_{i+1/2}= 0, & 1 \leq i \leq N,\\
    \displaystyle m_{i+1}-m_i + h \,\eit \times \frac{n_{i+1}+n_i}{2} - \frac{h^3}{12}\cn \dot \theta_ie_z = 0, & 1 \leq i \leq N,\\
     n_1 = n_{N+1}=0,\\ 
    m_1=m_{N+1}=0,\\
    m_{i} = \displaystyle\frac{E}{h} (\theta_{i} - \theta_{i-1})e_z, & 2 \leq i \leq N,\\
    \displaystyle \frac{\vert r_{i+1}-r_i\vert}{h}=1, & 1 \leq i \leq N,
    \end{array}
    \right.
    \label{eq: Nlink_bis_energ}
\end{equation}
where the variables $(r_i)_{1\leq i\leq N+1}$ and $(\theta_i)_{1\leq i\leq N}$ are coupled by~\eqref{eq:eit}.  Recall also that the system can be adapted for pinned or clamped boundary conditions, by replacing the third and fourth lines with the corresponding boundary conditions on the first segment.

As already mentioned in Remark \ref{todo}, here one needs to give both an initial condition for $(r_i)_{1\leq i\leq N+1}$ and $(\theta_i)_{1\leq i\leq N}$ still coupled by~\eqref{eq:eit}, while for the continuous system, the initial condition on $\theta$ could be deduced from the one for $r$. 

In the following, we establish that System \eqref{eq: Nlink_bis_energ} has a unique solution for any given initial condition. However, this solution does not prevent self-intersection of the filament. This is also true for the continuous system \eqref{eq: continuous}. In reality, self-intersection is impossible for 2D motion, and is usually enforced in models e.g. by solving Stokes equations more accurately, adding short-range repulsion forces or Lagrange multipliers. 

Several terms in System \eqref{eq: Nlink_bis_energ} can be seen as discretized counterparts of corresponding terms in the continuous elastohydrodynamics equations \eqref{eq: continuous}. The space derivatives $n_s$, $m_s$, $r_s$ and $\theta_s$ in \eqref{eq: continuous} are immediately identified to forward finite differences in \eqref{eq: Nlink_bis_energ}. Then, the terms $\dot{r}$ and $r_s \times n$ in \eqref{eq: continuous} can be matched to the terms of half-sum type ($r_{i+1/2}$ and $(n_{i+1}+n_i)/2$) in \eqref{eq: Nlink_bis_energ}. Finally, the second equation in \eqref{eq: Nlink_bis_energ} features an additional term, $-\frac{h^3}{12}\cn \dot \theta_ie_z$, that cannot be matched to the moment balance in \eqref{eq: continuous}, because it accounts for the hydrodynamic torque on a rigid link of positive length $h$. Heuristically, one can expect that this term vanishes when $h$ goes to zero, which is in agreement with the fact that external moments are neglected in \eqref{eq: continuous} as stated in Remark \ref{remark: external torque}.

Nonetheless, the presence of this supplementary term highlights the important fact that the equations governing the dynamics of the $N$-link model cannot be obtained as any direct \textit{numerical} discretization of System \eqref{eq: continuous}. In a sense, they rather arise from a \textit{mechanical} discretization of a continuous rod. This means that the mathematical convergence of one model to another, although physically plausible and numerically verified \cite{moreau2018asymptotic}, is not obvious, and this is the purpose of the present study. 

\subsection{Matrix expression}

In order to proceed, we rewrite \eqref{eq: Nlink_bis_energ} as an differential-algebraic matrix system. Let us introduce the vectors $\Xt = \left( \theta_1, \, \dots, \, \theta_N, \, r_1^x, r_1^y \right)^\intercal$, $\mathbf{N} = (n_1^x, \, n_1^y,\, \dots, \, n_N^x, \, n_N^y)^\intercal$, and $\mathbf{M} = (m_1^z, \, \dots ,\,m_N^z)^\intercal$, where, for any $v \in \mathbb{R}^3$, we have denoted by $v^x$, $v^y$, $v^z$ its coordinates in the $e_x$, $e_y$ and $e_z$ directions. We now rewrite the system \eqref{eq: Nlink_bis_energ} using $\mathbf{X}, \mathbf{N}, \mathbf{M}$ as unknowns.

Notice that the boundary conditions $n_{N+1}$ and $m_{N+1}^z$ do not appear among the unknowns in $\mathbf{N}$ and $\mathbf{M}$ due to the free boundary conditions, while we keep $m_{1}^z$ and $n_{1}$ since we might consider different boundary conditions at the first end. The $(r_i)_{2\leq i \leq N+1}$ were also removed from the unknowns, knowing that, due to the definition \eqref{eq:eit} of $\eit$, $r_i$ can be expressed in terms of $r_1$ and $(\theta_k)_{1\leq k\leq N}$ as
\begin{equation}
r_i = r_1 + h \sum\limits_{k=1}^{i-1} \begin{pmatrix}
    \cos \theta_k \\ \sin \theta_k
\end{pmatrix},\quad 2\leq i \leq N+1.
    \label{eq: Ri}
\end{equation}

Let us now rewrite \eqref{eq: Nlink_bis_energ} in a matrix form.  First, by differentiation of \eqref{eq: Ri}, one has $\dot r_{i+1/2} = (G(\Xt)\dot \Xt)_i$, where the matrix $G(\Xt)$, of size $(2N, N+2)$, is such that for any $\mathbf{W} = (W_i)_i \in \mathbb{R}^{N+2}$ and $1\leq i \leq N $, 
\begin{equation}
(G(\Xt)\mathbf{W})_i = \begin{pmatrix}
    W_{N+1}\\ W_{N+2}
\end{pmatrix}+ h \sum\limits_{k=1}^{i-1}\begin{pmatrix} -\sin \theta_k \\ \cos  \theta_k \end{pmatrix} W_k + \frac{h}{2}\begin{pmatrix} -\sin \theta_i \\ \cos  \theta_i \end{pmatrix} W_i \in \mathbb{R}^2.
    \label{eq:G}
\end{equation}

As a consequence, we rewrite  \eqref{eq: Nlink_bis_energ} as:
\begin{equation}
\left(
\begin{array}{ccc}
A_{11} & A_{12}(\Xt) & 0_{2N,N}\\
A_{21}(\Xt) & A_{22} & A_{23}\\
0_{N,2N} & 0_{N,N+2} & I_N
\end{array}
\right)
\left(
\begin{array}{c}
\mathbf{N}\\ \dot \Xt\\ \mathbf{M}
\end{array}
\right)= 
\left(
\begin{array}{c}
F_1\\ F_2\\ F_3(\Xt)
\end{array}
\right)\,,
\label{eq: matrixAXF}
\end{equation}
where, using
\begin{equation}
\mathbb{C}(\Xt) = \text{diag}(\C(\theta_1), \, \dots, \, \C(\theta_N)), \label{eq:CC}
\end{equation}
the underlying matrices are defined by
$$A_{11} = \left(
\begin{array}{ccccc}
-I_2 & I_2 & \cdots & \cdots & 0_2\\
0_2 & -I_2 & I_2 & \cdots & 0_2\\
\vdots & \ddots  & \ddots & \ddots &\vdots\\
\vdots & \ddots  & \ddots & \ddots &\vdots\\
0_2 & \cdots  & \cdots &0_2 & -I_2
\end{array}
\right),\,\,A_{12}(\Xt) = h \,\mathbb{C}(\Xt)G(\Xt)\,,
$$
$$
A_{21}(\Xt) = \frac{h}{2}\left(
\begin{array}{ccccc}
e^\intercal_{1, \perp} & e^\intercal_{1, \perp} & 0_2 & \cdots & 0_2\\
0_2 & e^\intercal_{2, \perp} & e^\intercal_{2, \perp} & \cdots & 0_2\\
\vdots & \ddots  & \ddots & \ddots &\vdots\\
0_2 & \cdots  & \cdots &e^\intercal_{N, \perp}  & 0_2\\
I_2 & 0_2  & \cdots & \cdots & 0_2\\
\end{array}
\right),\,\,A_{22} = \displaystyle -\frac{h^3}{12}\cn \begin{pmatrix}
1 &  &  &  & \\
 & \ddots  &  &  & \\
 &  & 1 &  & \\
 &  &  & 0 & \\
 &  &  &  & 0
\end{pmatrix}\,,
$$
$$
A_{23} = \left(
\begin{array}{ccccc}
-1 & 1 & 0 & \cdots & 0\\
0 & -1 & 1 & \cdots & 0\\
\vdots & \ddots  & \ddots & \ddots &\vdots\\
0 & \cdots  & \cdots &-1  & 0\\
0 & \cdots & \cdots & \cdots & 0\\
\end{array}
\right),
$$
and
$$
F_1 = 0_{2N},\,\,F_2=0_{N+2},\,\,F_3(\Xt)=\frac{E}{h}(0,\theta_2 - \theta_1,\cdots,\theta_N-\theta_{N-1})^\intercal\,.
$$
Note that the $(\ein)_{1\leq i \leq N}$ in $A_{21}$ stem from the fact that for any $v\in \mathbb{R}^2\subset \mathbb{R}^3$ and $1 \leq i \leq N$, $\eit \times v \cdot e_z = \ein \cdot v$. The system~\eqref{eq: matrixAXF} needs to be complemented with initial conditions for $\Xt$ i.e. for $(\theta_i)_{1\leq i\leq N}$ and $r_1$.

Through the use of~\eqref{eq: Ri}, the differential-algebraic system~\eqref{eq: matrixAXF}, complemented with the conditions $n_{N+1}(t)=0$ and $m_{N+1}^z(t)=0$ for all $t\geq 0$, is evidently equivalent to the original problem~\eqref{eq: Nlink_bis_energ}.

\subsection{Main results}

The aim of this paper is to show that the solution of the discrete system \eqref{eq: matrixAXF} (or equivalently \eqref{eq: Nlink_bis_energ}) is well defined and converges to the solution of the continuous system \eqref{eq: continuous}, as $h$ goes to 0.  

For the sake of completeness, let us recall the definitions of the (classical) functional spaces which are used in this section. Let $\Omega$ be an open set of $\mathbb{R}^d$. We define 
\[
H^1(\Omega)=\left\{f\in L^2(\Omega) \, / \, \partial_i f \in L^2(\Omega) \text{ for } 1\leq i \leq d \right\},\, \quad \|f\|_{H^1(\Omega)}^2 = \|f\|_{L^2(\Omega)}^2 + \sum_{i} \|\partial_i f\|_{L^2(\Omega)}^2,
\]
\[
H^2(\Omega)=\left\{f\in H^1(\Omega)  \, / \,   \partial_{i}\partial_j f \in L^2(\Omega) \text{ for } 1\leq i, j \leq d \right\},\, \quad \|f\|_{H^2(\Omega)}^2 = \|f\|_{H^1(\Omega)}^2 + \sum_{i,j} \|\partial_i \partial_j f\|_{L^2(\Omega)}^2,
\]
where $\partial_i$ denotes the derivative with respect to the $i$-th variable in $\mathbb{R}^d$. If $T>0$ is given and $H$ is a Hilbert space of functions defined from $\Omega$ to $\mathbb{R}$, we also define the space of functions $L^2$ in time with values in $H$ :
\[
L^2(0,T;H)=\left\{ f: [0,T]\times \Omega \to \mathbb{R}\, / \, \int_0^T \| f(t,\cdot)\|_H^2 \, \d t < +\infty \right\},
\]
endowed with the norm 
\[
\| f\|_{L^2(0,T;H)} = \left( \int_0^T \| f(t,\cdot)\|_H^2 \, \d t \right)^{1/2}.
\]
Finally, if $f$ is a vector-valued function, the notation $f \in H$ (resp. $f \in L^2(0,T;H)$) means that each component of $f$ belongs to $H$ (resp. $L^2(0,T;H)$).

\begin{theorem}[Well-posedness for the $N$-link swimmer]
    \label{thm: discrete well-posedness}
    The discrete system \eqref{eq: matrixAXF}, given a set of initial conditions $r_1(0)=r_1^0 \in \mathbb{R}^{2}, \, \theta_i(0)=\theta_i^0 \in \mathbb{R}$ for $1\leq i \leq N $, admits a unique global solution $r_1 \in C^1(\mathbb{R}_+), \, (\theta_i)_{1\leq i \leq N}\in C^1(\mathbb{R}_+)^{N}$, $(m_i^z)_{1\leq i \leq N}\in C^1(\mathbb{R}_+)^{N},\, (n_i)_{1\leq i \leq N}\in C^1(\mathbb{R}_+)^{N}$.
\end{theorem}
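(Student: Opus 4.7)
My strategy is to reduce the differential-algebraic system \eqref{eq: matrixAXF} to an autonomous ODE on $\Xt$ alone, apply the Cauchy--Lipschitz theorem for local existence and uniqueness, and upgrade to global existence using the energy dissipation estimate of Proposition \ref{thm: energy}. The first observation is that the third block row of \eqref{eq: matrixAXF} reads $\mathbf{M}=F_3(\Xt)$, a linear (and hence smooth) function of $\Xt$, so $\mathbf{M}$ decouples. It remains to solve for $(\mathbf{N},\dot\Xt)$ the block system
\begin{equation*}
\begin{pmatrix} A_{11} & A_{12}(\Xt) \\ A_{21}(\Xt) & A_{22} \end{pmatrix} \begin{pmatrix} \mathbf{N} \\ \dot\Xt \end{pmatrix} = \begin{pmatrix} 0 \\ -A_{23}F_3(\Xt) \end{pmatrix}.
\end{equation*}

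The plan is to invert this matrix by Schur complement. Since $A_{11}$ is upper block-triangular with $-I_2$ blocks on the diagonal, it is invertible independently of $\Xt$. Substituting $\mathbf{N}=-A_{11}^{-1}A_{12}(\Xt)\dot\Xt$ reduces the problem to the linear system $R(\Xt)\dot\Xt=-A_{23}F_3(\Xt)$ where $R(\Xt):=A_{22}-A_{21}(\Xt)A_{11}^{-1}A_{12}(\Xt)$ is the discrete hydrodynamic resistance matrix. The central step, and the step I expect to be the main obstacle, is proving that $R(\Xt)$ is invertible at every $\Xt$. I would establish this through a variational identification, aligned with the computation carried out for Proposition \ref{thm: energy}: a direct expansion should yield
\begin{equation*}
-\dot\Xt^\intercal R(\Xt)\dot\Xt \,=\, \sum_{i=1}^N h\,\dot r_{i+1/2}^\intercal\bigl(-\C(\theta_i)\bigr)\dot r_{i+1/2} \,+\, \frac{h^3}{12}\cn \sum_{i=1}^N \dot\theta_i^2,
\end{equation*}
with $\dot r_{i+1/2}=(G(\Xt)\dot\Xt)_i$. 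Since $-\C(\theta)$ is positive definite (as $\ct,\cn>0$) and the angular term is a nonnegative sum of squares, the right-hand side is a positive definite quadratic form in $\dot\Xt$: the $\dot\theta_i$ are controlled directly, and then $\dot r_1$ can be recovered from any single $\dot r_{i+1/2}$. Therefore $-R(\Xt)$ is symmetric positive definite, and in particular invertible.

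Once $R(\Xt)^{-1}$ is available, one obtains $\dot\Xt=\Phi(\Xt)$ with $\Phi:\mathbb{R}^{N+2}\to\mathbb{R}^{N+2}$ smooth, $2\pi$-periodic in each $\theta_i$ and independent of $r_1$. The Cauchy--Lipschitz theorem then yields a unique maximal $C^1$ solution. For global existence on $\mathbb{R}_+$, I would invoke Proposition \ref{thm: energy}: the elastic energy $\tfrac{E}{2h}\sum_i (\theta_i-\theta_{i-1})^2$ is non-increasing in time, which keeps $F_3(\Xt)$ bounded, and the total dissipation $\int_0^\infty \dot\Xt^\intercal(-R(\Xt))\dot\Xt\,\d t$ is finite, so $\dot\Xt\in L^2(\mathbb{R}_+)$; by Cauchy--Schwarz this prevents $(\theta_i)$ and $r_1$ from escaping to infinity in finite time. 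Finally, $\mathbf{M}=F_3(\Xt)$ and $\mathbf{N}=-A_{11}^{-1}A_{12}(\Xt)\dot\Xt$ are recovered as smooth functions of $(\Xt,\dot\Xt)$, so they inherit the $C^1$ regularity in time, yielding the statement. Once the positive-definiteness of $-R(\Xt)$ is established, the remaining steps are standard.
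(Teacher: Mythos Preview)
Your overall strategy---reduce to an ODE on $\Xt$ via the Schur complement $R(\Xt)=A_{22}-A_{21}A_{11}^{-1}A_{12}$, show $R$ invertible, then globalise using Proposition~\ref{thm: energy}---is exactly the paper's. The gap is in your invertibility step: the claimed identity ``$-\dot\Xt^\intercal R(\Xt)\dot\Xt=\sum_i h\,\dot r_{i+1/2}^\intercal(-\C)\dot r_{i+1/2}+\tfrac{h^3}{12}\cn\sum_i\dot\theta_i^2$'' is \emph{false}, and $-R(\Xt)$ is neither symmetric nor positive definite. The reason is structural: the last two rows of the second block row of \eqref{eq: matrixAXF} (hence of $A_{21}$) do not encode a moment balance but the boundary condition $n_1=0$; they read $\tfrac{h}{2}n_1=0$. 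When you form the Schur complement, these rows contribute $\tfrac{h}{2}(A_{11}^{-1}A_{12})_1$ rather than the $(G^\intercal)_{N+1:N+2}$ block you would need, so $A_{21}A_{11}^{-1}\neq -G^\intercal$ on those rows. Already for $N=1$ one computes
\[
-R(\Xt)=\begin{pmatrix}\tfrac{h^3}{3}\cn & \tfrac{h^2}{2}\cn e_{1,\perp}^\intercal\\[2pt] \tfrac{h^3}{4}\cn e_{1,\perp} & -\tfrac{h^2}{2}\C(\theta_1)\end{pmatrix},
\]
which is not symmetric, and whose quadratic form takes negative values (e.g.\ at $\mathbf{W}=(1,-\tfrac12 e_{1,\perp})$ for small $h$). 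So the ``direct expansion'' you anticipate does not yield the dissipation form.

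The paper circumvents this by proving instead that the full block matrix $\mathbf A(\Xt)=\bigl(\begin{smallmatrix}A_{11}&A_{12}\\A_{21}&A_{22}\end{smallmatrix}\bigr)$ is one-to-one, which suffices for invertibility of $R$. The crucial point is that for $(\mathbf V,\mathbf W)\in\ker\mathbf A$, the last two rows of the second block force $V_1=V_2=0$ (i.e.\ the virtual $n_1$ vanishes); only \emph{then} does the summation-by-parts computation go through cleanly and yield the dissipation identity \eqref{eq: def pos}, from which $\mathbf W=0$ and $\mathbf V=0$ follow. In other words, the variational identity you want holds on the subspace where $n_1=0$, and that constraint has to be used first. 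Two smaller points: $\Phi$ is not $2\pi$-periodic in the $\theta_i$ since $F_3$ is linear in them (this is harmless for your argument, which relies on the energy anyway); and your global-existence argument via Proposition~\ref{thm: energy} is correct and matches the paper's Lemma~\ref{thm: bounds_rh}.
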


For $i=1,\cdots,N+1$, let $\phi_i$ be continuous and piecewise linear functions, defined in $H^1(0, L) $ by $\phi_i(jh) = \delta_{i-1,j}$ for $j=0,\cdots , N$ and $\mathcal{V}_h := \text{Span}((\phi_i)_{1\leq i\leq N+1})= \{v_h \in C^0(0,L) \, | \,  v_h \text{ affine on } [(i-1)h, ih] \text{ for } 1 \leq i \leq N\}$. 
We then introduce the following linear interpolates in $\mathcal{V}_h$, for $T>0$ and
$(t,s) \in Q_T = [0,T]\times[0,L]$
\begin{align}
    r^h(t,s) &= \sum\limits_{i=1}^{N+1} r_i(t) \phi_i(s), \label{eq: interp_r_1}\\
    m^{h,z}(t,s) &= \sum\limits_{i=1}^{N+1} m_i^z(t) \phi_i(s), \\
    n^h(t,s) &= \sum\limits_{i=1}^{N+1} n_i(t) \phi_i(s).\label{eq: interp_n}
\end{align}
Note  that \eqref{eq: interp_r_1} is nothing but a rewriting of \eqref{eq: interp_rh}. We also highlight that $r^h_s(t,s) = \displaystyle \frac{r_{i+1}(t)-r_i(t)}{h}$ for $s \in [(i-1) h, i h]$, from which we deduce that $\vert r^h_s(t,s) \vert^2 = 1$ by construction. Now, let us define the piecewise constant interpolate
\begin{equation}
\bar \theta^h(t,s) = \sum\limits_{i=1}^N \theta_i (t) \mathbbm{1}_{L_i}(s). \label{eq: interp_theta}
\end{equation}
which belongs to $\mathcal{W}_h = \{w_h \in L^2(0,L) \, | \,  w_h \text{ constant on } ((i-1)h, ih) \text{ for } 1 \leq i \leq N\}$.

For technical reasons, we also introduce a piecewise linear interpolate for $\theta$ in $\mathcal{V}_h$ and a piecewise constant interpolate for $m$ in $\mathcal{W}_h$:
 \begin{align}
    \theta^h(t,s) & = \sum\limits_{i=1}^{N} \theta_i(t) \phi_{i+1}(s) + \theta_1(t) \phi_1(s), \label{eq: interpolate theta_h} \\
  \bar{m}^{h,z}(t,s) &= \sum\limits_{i=1}^N m_{i}^z (t)\mathbbm{1}_{L_i}(s).
   \label{eq: interp_r_2}
\end{align}
Eventually, we define
\begin{equation*}
   \hat r^h(t,s)= r^h(t,0)  + \int_0^s (\cos \theta^h(t,u),\, \sin \theta^h(t,u))^\intercal \d u,
\end{equation*}
in such a way that $\hat r^h_s = (\cos \theta^h,\, \sin \theta^h)^\intercal$. As $\theta^h$ is continuous and piecewise linear, this last expression is differentiable in $s$ and we have
\begin{equation*}
    \hat r^h_{ss} = \theta^h_s (-\sin \theta^h, \, \cos \theta^h)^\intercal,
    \label{eq: rssthetastheta}
\end{equation*}
which gives in particular 
\begin{equation*}
   \bar m^{h,z} =  E (\hat r^h_{ss} \times \hat r^h_s)\cdot e_z = E \theta_s^h.
\label{eq: rssrsthetas}
\end{equation*}

We can now state the second main result.

\begin{theorem}[Convergence]
    \label{thm: convergence}
Let $r^0\in C^2(0,L)$ be given together with a representative $\theta^0\in C^1(0,L)$  such that $r^0_s = (\cos \theta^0, \, \sin \theta^0)^\intercal \in C^1(0,L)$.

Let $T>0$. For all $N \in \mathbb{N}^*$, let a set of discrete initial conditions $\Xt^0=(\theta_1^0, \, \dots,\, \theta_N^0,\, r_1^{x,0},\, r_1^{y,0}) \in \mathbb{R}^{N+2}$ be given together with the associated solutions of the discrete problem \eqref{eq: Nlink_bis_energ} on $[0,T]$ (Theorem \ref{thm: discrete well-posedness}). The corresponding interpolants $r^h,m^{h,z}, n^h$ are defined from (\ref{eq: interp_r_1} -- \ref{eq: interp_n}).

Assume that
\begin{equation}
r^h(0,\cdot) \xrightarrow[N \rightarrow + \infty]{} r^0 \text{ in } L^2(0,L), \label{eq: cvCI}
\end{equation}
and that there exists $C^0>0$, independent of $h$, such that, for all $N \in \mathbb{N}^*$,
\begin{equation}
\frac E 2 \sum\limits_{i=1}^{N-1} h \left(\frac{\theta_{i+1}(0)-\theta_i(0)}{h}\right)^2 = C^0_h \leq C^0 .
\label{eq: hyp_theta0}
\end{equation}

Then, there exist $r\in H^1(Q_T)$, $m\in  L^2(0,T; H^1(0,L))$ and $n \in L^2(0,T; H^1(0,L))$  such that, up to extraction of a subsequence when $h\to 0$, we have 
    \begin{align*}
        &r^h \rightharpoonup r \mbox{ weakly in } H^1(Q_T),\\
      &  m^{h,z} e_z \rightharpoonup m \mbox{ weakly in } L^2(0,T; H^1(0,L)),\\
      &  n^h \rightharpoonup n \mbox{ weakly in } L^2(0,T; H^1(0,L)).
    \end{align*}
    Moreover, $r \in L^2(0,T;H^2(0,L))$ and $(r,m,n)$ satisfy System \eqref{eq: continuous} for a.e. $(s,t)\in Q_T$ with initial condition $r^0$.
\end{theorem}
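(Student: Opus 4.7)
The overall plan is to establish uniform-in-$N$ (equivalently, in $h$) a priori bounds on all the interpolants in \eqref{eq: interp_r_1}--\eqref{eq: interp_r_2}, extract weakly convergent subsequences, and pass to the limit in the discrete system \eqref{eq: Nlink_bis_energ}. The central ingredient is the energy dissipation identity of Proposition~\ref{thm: energy}, which, combined with the uniform control \eqref{eq: hyp_theta0} of the initial bending energy, provides at all times a uniform bound on the discrete bending energy and a uniform bound on the time-integrated hydrodynamic dissipation. Translated into the interpolants, this gives a uniform $L^\infty(0,T;L^2(0,L))$ bound on $\theta^h_s$ and a uniform $L^2(Q_T)$ bound on $\dot r^h$. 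Together with the piecewise identity $|r^h_s|=1$, the latter yields a uniform $H^1(Q_T)$ bound on $r^h$, while the identity $\hat r^h_{ss}=\theta^h_s(-\sin\theta^h,\cos\theta^h)^\intercal$ gives a uniform $L^2(0,T;H^2(0,L))$ bound on $\hat r^h$.

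Next, I would bound the force and moment interpolants in $L^2(0,T;H^1(0,L))$. The first line of \eqref{eq: Nlink_bis_energ} gives $n^h_s = -\C(\bar\theta^h)\dot r^h_{i+1/2}$ on each segment, so together with the boundary condition $n_{N+1}=0$ and a Poincaré-type inequality on $[0,L]$, the $L^2(Q_T)$ bound on $\dot r^h$ transfers to a uniform $L^2(0,T;H^1)$ bound on $n^h$. The discrete moment balance, combined with this and the identity $\bar m^{h,z}=E\theta^h_s$, yields an analogous bound on $m^{h,z}$ and $\bar m^{h,z}$. To prepare for the nonlinear passage to the limit, I would then establish strong compactness of the angle interpolants: using a suitable rearrangement of the moment balance, one extracts a time-regularity estimate on $\dot\theta^h$ in a negative-order spatial Sobolev norm, and Aubin--Lions then yields strong $L^2(Q_T)$ convergence of $\theta^h$---and therefore of $\bar\theta^h$, which differs from $\theta^h$ by $O(h)$ in $L^2$---to a common limit $\theta$, hence pointwise a.e.\ along a subsequence.

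Weak compactness then provides the limits $r\in H^1(Q_T)$, $m\in L^2(0,T;H^1(0,L))$, $n\in L^2(0,T;H^1(0,L))$. A standard estimate $\|r^h-\hat r^h\|_{L^2(Q_T)} = O(h)$ shows that $r^h$ and $\hat r^h$ share the same limit $r$, so the $L^2(0,T;H^2)$ bound on $\hat r^h$ transfers to $r$ by weak lower semicontinuity; similarly $m^{h,z}$ and $\bar m^{h,z}$ share the same limit. Passing to the limit in the discrete equations then proceeds as follows: the linear finite-difference terms give the corresponding derivatives by weak convergence; the nonlinear products $\C(\bar\theta^h)\dot r^h_{i+1/2}$ and $\eit\times(n_{i+1}+n_i)/2$ are handled by combining strong convergence of $\bar\theta^h$ (hence of $\C(\bar\theta^h)$ in every $L^p$, $p<\infty$) with the weak convergence of $\dot r^h$ and $n^h$; and the extra torque term $\tfrac{h^3}{12}\cn\dot\theta^h_i$ tends to zero in $L^1(Q_T)$ since energy dissipation only bounds $h^{3/2}\dot\theta^h$ in $L^2$. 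The constitutive relation $m=E\theta_s e_z=Er_{ss}\times r_s$ follows from $\bar m^{h,z}=E\theta^h_s$ and the strong convergence of $\theta^h$, the boundary conditions pass by the $H^1$-trace theorem, and the initial datum is recovered from \eqref{eq: cvCI}.

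The main technical obstacle, I expect, lies in two places. First, one must reconcile three geometric descriptions---the piecewise linear $r^h$ with only piecewise unit tangent, the reconstructed $\hat r^h$ with continuous unit tangent $\hat r^h_s=(\cos\theta^h,\sin\theta^h)^\intercal$, and the angle-based interpolants $\theta^h$ and $\bar\theta^h$---and show that they all yield the same limit $r$, satisfying $|r_s|=1$ almost everywhere and lying in $L^2(0,T;H^2(0,L))$. Second, establishing strong compactness of $\bar\theta^h$---unavoidable because $\C(\theta)$ enters nonlinearly in the force balance---requires extracting time regularity of $\dot\theta^h$ from a system in which $\dot\theta^h$ is only implicitly coupled to the variables already controlled by the energy estimate, and the $h$-dependence in the coupling matrices $G(\Xt)$ and $\mathbb{C}(\Xt)$ must be carefully tracked to avoid losing powers of $h$ when inverting.
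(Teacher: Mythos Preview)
Your overall architecture---energy dissipation $\Rightarrow$ uniform bounds $\Rightarrow$ weak compactness $\Rightarrow$ passage to the limit---matches the paper, and your bounds on $r^h$, $n^h$, $m^{h,z}$, $\theta^h_s$ and the handling of the extra torque term $\tfrac{h^3}{12}\cn\dot\theta_i$ are all correct. The genuine gap is in your strong-compactness step: you propose to obtain strong $L^2(Q_T)$ convergence of $\theta^h$ (hence of $\bar\theta^h$) via an Aubin--Lions argument based on a negative-order time-regularity estimate on $\dot\theta^h$. This does not work, because $\theta^h$ itself is \emph{not} uniformly bounded in $L^2(Q_T)$: the energy identity only controls $\sum_i h^3\dot\theta_i^2$ in $L^1(0,T)$, so each $\theta_i(t)$ may drift by $O(h^{-3/2})$ over $[0,T]$, and there is no Poincar\'e-type mechanism to anchor $\theta^h(t,\cdot)$ (for free boundary conditions, $\theta_1(t)$ is not controlled uniformly in $h$). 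Consequently no triple $X\hookrightarrow L^2\hookrightarrow Y$ with $\theta^h$ bounded in $L^2(0,T;X)$ is available, and rearranging the moment balance yields only $h^2\dot{\bar\theta}^h$ bounded, never $\dot{\bar\theta}^h$ in any fixed negative Sobolev norm.

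The fix, which is exactly what the paper does, is to bypass $\theta^h$ and argue compactness directly on $r^h_s=(\cos\bar\theta^h,\sin\bar\theta^h)^\intercal$, which is all that the nonlinearity $\C(\cdot)$ actually sees. Since $|r^h_s|\equiv1$, boundedness is automatic; the bound on $\theta^h_s$ gives $r^h_s$ bounded in $L^2(0,T;BV(0,L))$ via $|e_{i+1,\parallel}-e_{i,\parallel}|\le|\theta_{i+1}-\theta_i|$; and the $L^2(Q_T)$ bound on $\dot r^h$ immediately gives $\dot r^h_s=\partial_s\dot r^h$ bounded in $L^2(0,T;H^{-1}(0,L))$. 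Aubin--Lions--Simon with the compact embedding $BV(0,L)\hookrightarrow L^p(0,L)$, $1\le p<\infty$, then yields strong convergence of $r^h_s$ in $L^2(0,T;L^p(0,L))$, hence of $\C(r^h_s)$ in $L^2(Q_T)$, and your passage to the limit goes through. The reconciliation of $r^h$ and $\hat r^h$ that you flag is then handled by showing $\|\bar\theta^h-\theta^h\|_{L^2(Q_T)}=O(h)$ (from the $L^2$ bound on $\bar m^{h,z}$), so that $\hat r^h_s$ and $r^h_s$ share the same strong limit $r_s$, and the weak $L^2(0,T;H^2)$ convergence of $\hat r^h$ transfers to $r$.
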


As we shall prove, for any initial data $r^0 \in C^2(0,L)$, it is possible to construct discrete initial conditions $\Xt^0$ satisfying hypotheses \eqref{eq: cvCI} and \eqref{eq: hyp_theta0}. As a consequence, we obtain the following existence result.

\begin{corollary}[Existence of solutions to the continuous system] \label{corollary}
   Given $r^0 \in C^2(0,L)$, there exists at least one solution $r\in H^1(Q_T)\cap L^2(0,T; H^2(0,L))$, $m\in L^2(0,T; H^1(0,L))$ and $n \in L^2(0,T; H^1(0,L))$ to System \eqref{eq: continuous} with initial condition $r^0$.
\end{corollary}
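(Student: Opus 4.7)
The plan is to exhibit, for each $N$, a set of discrete initial conditions $\Xt^0$ that satisfy hypotheses \eqref{eq: cvCI} and \eqref{eq: hyp_theta0} of Theorem~\ref{thm: convergence}, then apply Theorems~\ref{thm: discrete well-posedness} and~\ref{thm: convergence} in succession. No new \textit{a priori} estimate is needed: the content of the corollary is entirely concentrated in the construction and the compatibility check of the discrete data.

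Since the admissibility of $r^0 \in C^2(0,L)$ for the inextensible model forces $|r^0_s| = 1$, upon fixing a value of $\theta^0(0)$ there is a unique $C^1$ representative $\theta^0: [0,L] \to \mathbb{R}$ with $r^0_s = (\cos \theta^0, \sin \theta^0)^\intercal$, as required by the hypotheses of Theorem~\ref{thm: convergence}. For each $N$, I would define
\begin{equation*}
\theta_i^0 := \theta^0\!\bigl((i - \tfrac{1}{2})h\bigr), \qquad 1 \le i \le N,
\end{equation*}
and set $(r_1^{x,0}, r_1^{y,0}) := r^0(0)$; the remaining $r_i(0)$ for $2 \le i \le N+1$ are then obtained from~\eqref{eq: Ri}, which automatically produces an inextensible discrete configuration. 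For hypothesis~\eqref{eq: hyp_theta0}, the mean value theorem gives $\xi_i \in ((i - \tfrac{1}{2})h, (i + \tfrac{1}{2})h)$ with $(\theta_{i+1}^0 - \theta_i^0)/h = \theta^0_s(\xi_i)$, hence
\begin{equation*}
\frac{E}{2} \sum_{i=1}^{N-1} h \left( \frac{\theta_{i+1}^0 - \theta_i^0}{h} \right)^2 \;\le\; \frac{EL}{2} \, \|\theta^0_s\|_{L^\infty(0,L)}^2,
\end{equation*}
which is finite and independent of $h$. For hypothesis~\eqref{eq: cvCI}, each term $h(\cos \theta_k^0, \sin \theta_k^0)^\intercal$ in~\eqref{eq: Ri} is the midpoint quadrature of the exact arc $\int_{(k-1)h}^{kh}(\cos \theta^0, \sin \theta^0)^\intercal \, \d u$; the $C^1$ regularity of $\theta^0$ makes the per-link quadrature error $O(h^2)$, so the accumulated telescoping error over the $O(h^{-1})$ links remains $O(h)$. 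Combined with the on-link estimate $|\theta^0(u) - \theta_i^0| = O(h)$ for $u \in L_i$, this yields $\|r^h(0, \cdot) - r^0\|_{L^\infty(0,L)} = O(h)$, which implies in particular the required $L^2$-convergence.

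With these initial data, Theorem~\ref{thm: discrete well-posedness} provides a unique global solution of the $N$-link system for every $N$, and Theorem~\ref{thm: convergence} then supplies, up to extraction, weak limits $(r, m, n)$ in the announced functional spaces solving \eqref{eq: continuous} for a.e.\ $(s, t) \in Q_T$ with $r(0, \cdot) = r^0$. The only point that requires care is the telescoping control when reconstructing the $r_i(0)$ from~\eqref{eq: Ri}: it is precisely the sharp $O(h^2)$-per-link midpoint quadrature error together with the $O(h^{-1})$ link count that keeps the cumulative initial position error at $O(h)$, hence compatible with~\eqref{eq: cvCI}. A coarser sampling such as $\theta_i^0 := \theta^0((i-1)h)$ would work equally well, with the same asymptotic accounting.
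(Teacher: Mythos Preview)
Your proof is correct and follows the same overall strategy as the paper: construct discrete initial data satisfying \eqref{eq: cvCI} and \eqref{eq: hyp_theta0}, then invoke Theorems~\ref{thm: discrete well-posedness} and~\ref{thm: convergence}. The technical implementation differs slightly. The paper takes $\theta_i^0$ to be the \emph{average} $\frac{1}{h}\int_{L_i}\theta^0(u)\,\d u$ and verifies \eqref{eq: cvCI} by applying the Poincar\'e--Wirtinger inequality to bound $\|\theta^0-\bar\theta^{h,0}\|_{L^2}$, then passing to $r_s^{h,0}$ via the Lipschitz property of $(\cos,\sin)$ and using $r^{h,0}(0)=r^0(0)$ to conclude $H^1$-convergence. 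You instead use \emph{midpoint sampling} $\theta_i^0=\theta^0((i-\tfrac12)h)$ and control the node positions $r_i(0)$ directly through the midpoint quadrature error. Both routes yield the same $O(h)$ uniform bound and the identical estimate $\sum h\,|\theta^0_{i+1}-\theta^0_i|^2/h^2 \le L\|\theta^0_s\|_{L^\infty}^2$ for \eqref{eq: hyp_theta0}; neither is materially simpler than the other.
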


Despite both establishing existence of solutions for the elastohydrodynamics equation \eqref{eq: continuous}, Corollary \ref{corollary} and the result of Mori and Ohm \cite[Theorem 1.1]{mori_well-posedness_2023}, slightly differ. 
Indeed, Mori and Ohm prove local existence (and global existence for small enough initial data) and uniqueness for $\kappa$ (which stands for $\theta_s$ in their work) in $C^0([0,T];L^2(0,L))\cap C^0((0,T];H^1(0,L))$. On the other hand, Corollary \ref{corollary} establishes global existence of solutions (for any suitably regular initial data) such that $\theta_s=m^z/E$ belongs to $L^2(0,T;H^1(0,L))$.

Of course, an extension of \cite[Theorem 1.1]{mori_well-posedness_2023} to a space compatible with Corollary \ref{corollary} would allow to combine both results to provide global existence and uniqueness of solutions for any initial data. Recent developments have been obtained by Ohm \cite{ohm2025}, studying a three-dimensional version of the filament dynamics including shear deformation (Kirchhoff rod), and casting the dynamics in terms of $r_{ss}$ instead of $\kappa$. This paves the way to show that  \eqref{eq: continuous} is well posed for $r\in L^2(0,T;H^2(0,L))$. This would confirm the uniqueness of the solutions of Corollary \ref{corollary}. Consequently, the convergences obtained in Theorem~\ref{thm: convergence} would not hold only for subsequences but for the whole family $(r^h, m^{h,z}, n^h)$.

\section{Energy dissipation}\label{sec: energy}

The main idea to prove Theorem \ref{thm: discrete well-posedness} consists in ensuring that all the variables of System \eqref{eq: Nlink_bis_energ} remain bounded uniformly in time. For Theorem \ref{thm: convergence}, estimates on the interpolates \eqref{eq: interp_r_1}-\eqref{eq: interp_r_2}, uniform in $h$ and in well-chosen functional spaces, are the cornerstone of the proof. As a matter of fact, for both proofs, most of these key estimates derive from a single formula, that characterizes the dissipation of energy in solutions of System \eqref{eq: Nlink_bis_energ}. This formula is established in the following proposition.

\begin{proposition}[Energy dissipation]
\label{thm: energy}
Let $(r_i)_{1\leq i \leq N+1}$, $(\theta_i)_{1\leq i \leq N}$, $(m_i)_{1\leq i \leq N+1}$, $(n_i)_{1\leq i \leq N+1}$ be a solution to system \eqref{eq: Nlink_bis_energ}.
Then, the following identity holds for all $t \in \mathbb{R}_+$:
\begin{equation}
     \frac{1}{2}\frac{\d}{\d t}\left[E \sum\limits_{i=1}^{N-1} h \left(\frac{\theta_{i+1}-\theta_i}{h}\right)^2\right] + \sum\limits_{i=1}^N \frac{h^3}{12}c_{\perp} \dot \theta_i^2 - \sum\limits_{i=1}^N h\, \C(\theta_i)\,\dot r_{i+\frac12} \cdot \dot r_{i+\frac12} = 0\,. 
    \label{eq: energie}
\end{equation}
\end{proposition}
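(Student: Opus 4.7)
The plan is to derive \eqref{eq: energie} by testing the force and moment balance equations of \eqref{eq: Nlink_bis_energ} against the natural kinematic variables (the midpoint velocity $\dot r_{i+1/2}$ and the angular velocity $\dot\theta_i$), then combining the resulting identities through a discrete integration by parts that eliminates the internal contact-force contributions thanks to the boundary conditions.

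First, I would take the dot product of the force balance with $\dot r_{i+1/2}$ and sum over $i=1,\dots,N$ to obtain
\begin{equation*}
\sum_{i=1}^N (n_{i+1}-n_i)\cdot \dot r_{i+1/2} = -\sum_{i=1}^N h\,\C(\theta_i)\,\dot r_{i+1/2}\cdot \dot r_{i+1/2}.
\end{equation*}
Next I would multiply the $e_z$-component of the moment balance by $\dot\theta_i$ and sum; using the planar identity $(\eit\times v)\cdot e_z = \ein\cdot v$, this gives
\begin{equation*}
\sum_{i=1}^N (m_{i+1}^z - m_i^z)\dot\theta_i + \sum_{i=1}^N h\,\dot\theta_i\,\ein\cdot \tfrac{n_{i+1}+n_i}{2} = \sum_{i=1}^N \tfrac{h^3}{12}\cn\,\dot\theta_i^2.
\end{equation*}
The key kinematic identity $\dot r_{i+1}-\dot r_i = h\dot\theta_i\ein$, obtained by differentiating \eqref{eq:eit} in time, rewrites the middle sum as $\sum_{i=1}^N(\dot r_{i+1}-\dot r_i)\cdot \frac{n_{i+1}+n_i}{2}$.

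The crucial algebraic ingredient is then the discrete product rule
\begin{equation*}
(\dot r_{i+1}-\dot r_i)\cdot \tfrac{n_{i+1}+n_i}{2} + (n_{i+1}-n_i)\cdot \tfrac{\dot r_{i+1}+\dot r_i}{2} = \dot r_{i+1}\cdot n_{i+1} - \dot r_i\cdot n_i,
\end{equation*}
which, once summed over $i$, telescopes to $\dot r_{N+1}\cdot n_{N+1} - \dot r_1\cdot n_1$; this vanishes under the free boundary conditions $n_1 = n_{N+1} = 0$. Combining with the force-balance identity yields $\sum_i(\dot r_{i+1}-\dot r_i)\cdot \frac{n_{i+1}+n_i}{2} = \sum_i h\,\C(\theta_i)\,\dot r_{i+1/2}\cdot \dot r_{i+1/2}$, which when substituted above removes all contact-force cross terms and leaves
\begin{equation*}
\sum_{i=1}^N (m_{i+1}^z - m_i^z)\dot\theta_i = -\sum_{i=1}^N h\,\C(\theta_i)\,\dot r_{i+1/2}\cdot \dot r_{i+1/2} + \sum_{i=1}^N \tfrac{h^3}{12}\cn\,\dot\theta_i^2.
\end{equation*}

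Finally, I would recognize the left-hand side as minus the time derivative of the elastic energy. An Abel summation that uses $m_1^z = m_{N+1}^z = 0$ gives $\sum_{i=1}^N(m_{i+1}^z - m_i^z)\dot\theta_i = -\sum_{i=2}^N m_i^z(\dot\theta_i - \dot\theta_{i-1})$; substituting the constitutive law $m_i^z = (E/h)(\theta_i-\theta_{i-1})$ turns this into $-\tfrac12\tfrac{\d}{\d t}\bigl[E\sum_{i=1}^{N-1}h((\theta_{i+1}-\theta_i)/h)^2\bigr]$. Rearranging reproduces \eqref{eq: energie} exactly. The main obstacle is essentially bookkeeping: tracking the index shifts in the two telescoping/Abel summations and checking that every boundary term is killed by one of the prescribed boundary conditions; once the discrete product rule is in hand, the rest is a direct algebraic verification. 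The same argument extends verbatim to the pinned and clamped cases, since then $\dot r_1 = 0$ takes over the role of $n_1 = 0$ and, if needed, $\dot\theta_1 = 0$ takes over that of $m_1^z = 0$.
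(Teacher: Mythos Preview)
Your proposal is correct and follows essentially the same approach as the paper: test the force balance against $\dot r_{i+1/2}$, use the kinematic identity $\dot r_{i+1}-\dot r_i = h\dot\theta_i\ein$, perform a discrete summation by parts with the free boundary conditions to pass to the moment balance, and then invoke the constitutive law $m_i^z = (E/h)(\theta_i-\theta_{i-1})$ to recognize the time derivative of the elastic energy. The only cosmetic difference is that you organize the computation around an explicit discrete product rule and treat the two balances in parallel, whereas the paper chains them sequentially; the algebraic content is identical.
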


\begin{proof}
Multiplying the force balance equation \eqref{eq: Force balance} by the velocity $\dot r_{i+\frac12}$, and summing over $i=1,\dots,N$ leads to
\begin{align*}
    h\sum_{i=1}^{N} \C(\theta_i)\,\dot r_{i+\frac12}  \cdot \dot r_{i+\frac12} &= - \sum \limits_{i=1}^{N} \dot r_{i+\frac12} \cdot (n_{i+1}-n_i)\d s\\
    & = \sum\limits_{i=1}^N (\dot r_{i+1} - \dot r_i) \cdot \frac{n_{i+1}+n_i}{2},
\end{align*}
using a summation by parts and the boundary conditions $n_1 = n_{N+1}=0$.

But, since $r_{i+1}-r_i= h \,\eit$, we have $\dot r_{i+1}-\dot r_i= h\, \dot \theta_i \ein$. This, together with  the moment balance equation \eqref{eq: Nlink_bis_energ}, and the boundary conditions $m_1=m_{N+1}=0$ gives
\begin{align*}
    h\sum_{i=1}^{N} \C(\theta_i)\,\dot r_{i+\frac12}  \cdot \dot r_{i+\frac12} & = h \sum\limits_{i=1}^N \dot \theta_i e_{i, \perp} \cdot \frac{n_{i+1}+n_i}{2}\\
    & = h \sum\limits_i  \dot \theta_i e_z \cdot \eit \times \frac{n_{i+1}+n_i}{2} \\
     &= \sum\limits_{i=1}^N \dot \theta_i e_z \cdot \left[ m_i-m_{i+1}+\frac{h^3}{12}c_{\perp}\dot \theta_i e_z\right]\\
     & = \sum\limits_{i=1}^N \frac{h^3}{12}c_{\perp}\dot \theta_i^2 + \sum\limits_{i=1}^N \dot \theta_i e_z\cdot (m_i-m_{i+1})\\
     & = \sum\limits_{i=1}^N \frac{h^3}{12}c_{\perp}\dot \theta_i^2 + \sum\limits_{i=2}^N (\dot \theta_{i} - \dot \theta_{i-1})e_z\cdot m_i.
\end{align*}
 Now, observing that we have $m_{i} \cdot e_z =  \frac{E}{h}(\theta_{i} - \theta_{i-1})$ for $i=2,\cdots,N$, we deduce
\begin{equation*}
     \frac{E}{h} \sum\limits_{i=1}^{N-1} (\dot \theta_{i}-\dot\theta_{i-1}) (\theta_{i}-\theta_{i-1}) + \sum\limits_{i=1}^N \frac{h^3}{12}c_{\perp} \dot \theta_i^2 -h\sum_{i=1}^{N} \C(\theta_i)\,\dot r_{i+\frac12}  \cdot \dot r_{i+\frac12} = 0 
\end{equation*}
which directly yields \eqref{eq: energie}.

\end{proof}

\begin{remark}
Recalling from \eqref{eq:G} that $\dot r_{i+\frac12}$ depends linearly on $\dot \Xt = \left( \dot \theta_1, \, \dots, \, \dot \theta_N, \, \dot r_1^x, \dot r_1^y \right)^\intercal$, the energy identity \eqref{eq: energie} can also be rewritten in matrix form, 
\begin{equation}
\frac{1}{2}\frac{\d}{\d t}\left[E \sum\limits_{i=1}^{N-1} h \left(\frac{\theta_{i+1}-\theta_i}{h}\right)^2\right] + \dot \Xt^\intercal M(\Xt)\dot \Xt =0 .
\label{eq: matrix_energy}
\end{equation}
In \eqref{eq: matrix_energy}, $M(\Xt) = M_1+M_2(\Xt)$ with
\begin{equation*}
M_1 = \frac{h^3}{12}\cn \begin{pmatrix}
1 &  &  &  & \\
 & \ddots  &  &  & \\
 &  & 1 &  & \\
 &  &  & 0 & \\
 &  &  &  & 0
\end{pmatrix}\,,\,\,M_2 = -h G(\Xt)^\intercal \mathbb{C} (\Xt)G(\Xt),
    \label{eq: M1}
\end{equation*}
where $G(\Xt)$ and $\mathbb{C} (\Xt)$ are defined in equations \eqref{eq:G} and \eqref{eq:CC}.
\end{remark}

\begin{remark} Notice that the various discrete integrations by parts in the proof of Proposition \ref{thm: energy} remain true when one considers pinned \eqref{eq:BC proximal pinned disc} or clamped \eqref{eq: BC proximal clamped disc} boundary conditions, so \eqref{eq: int_energie} still holds for those cases.
\end{remark}

\section{Proofs} \label{sec: proofs}

\subsection{Proof of Theorem \ref{thm: discrete well-posedness}}\label{sec: proof existence}

The proof of the existence of a global solution to the N-link model is structured as follows: first, we show the local-in-time well-posedness of the system \eqref{eq: Nlink_bis_energ} or equivalently~\eqref{eq: matrixAXF} for free boundary conditions at both ends. Then, we deduce the global existence.\\

\noindent\textbf{Local-in-time existence and uniqueness.}
Let $N \in \mathbb{N}^*$, and let us prove that, for a set of initial conditions $\Xt^0=(\theta_1^0, \, \dots,\, \theta_N^0,\, r_1^{x,0},\, r_1^{y,0}) \in \mathbb{R}^{N+2}$, System~\eqref{eq: matrixAXF} admits a unique solution $(\mathbf{N},\Xt, \mathbf{M})$  in $C^0([0,T])^{2N}\times C^1([0,T])^{N+2} \times C^0([0,T])^{N}$ for $T>0$ sufficiently small.
We recall the matrix formulation~\eqref{eq: matrixAXF} of the $N$-link system from equation \eqref{eq: matrixAXF}: 
\[
\left(
\begin{array}{ccc}
A_{11} & A_{12}(\Xt) & 0_{2N,N}\\
A_{21}(\Xt) & A_{22} & A_{23}\\
0_{N,2N} & 0_{N,N+2} & I_N
\end{array}
\right)
\left(
\begin{array}{c}
\mathbf{N}\\ \dot \Xt\\ \mathbf{M}
\end{array}
\right)= 
\left(
\begin{array}{c}
0_{2N}\\ 0_{N+2}\\ F_3(\Xt)
\end{array}
\right)\,.
\]

Note that the system in this form is an differential-algebraic system, combining differential equations on $\mathbf{X}$ and algebraic equations on $\mathbf{N}$ and $\mathbf{M}$. It is suitable to recast it as a differential system, which in turn allows use of the Cauchy-Lipschitz theorem to establish the existence of solutions. To do so, note that the matrix 
$\left(
\begin{array}{cc}
A_{11} & 0_{2N,N}\\0_{N,2N} & I_N
\end{array}
\right)$ is clearly invertible. Therefore, we can rewrite~\eqref{eq: matrixAXF} as
\begin{equation}
    B(\Xt)\dot \Xt=\tilde{F}(\Xt)
    \label{eq: BtildeF}
\end{equation}
with
\begin{equation}
B(\Xt) = A_{22} - (A_{21}(\Xt)\,A_{23}) 
\begin{pmatrix}
A_{11} & 0_{2N,N}\\0_{N,2N} & I_N
\end{pmatrix}^{-1}
\begin{pmatrix}
A_{12}(\Xt)\\0_{N,N+2}
\end{pmatrix} = A_{22}-A_{21}(\Xt)A_{11}^{-1}A_{12}(\Xt)
\label{eq: definition B}
\end{equation}
and
\begin{equation*}
    \tilde F(\Xt) = F_2 - A_{23}F_3(\Xt)-A_{21}(\Xt)A_{11}^{-1}F_1 = -A_{23}F_3(\Xt)\,.
\end{equation*}
Both $B$ and $\tilde{F}$ are Lipschitz continuous in $\Xt$ since $A_{12},\,A_{21}$ and $F_3$ depend on $\Xt$ in Lipschitzian way. \\

Now, it remains to show that, for all $\mathbf{X}=(\theta_1, \, \dots,\, \theta_N,\, r_1^{x},\, r_1^{y})  \in \mathbb{R}^{N+2}$, $B(\mathbf{X})$ is invertible or, since it is a square matrix, one-to-one.  From \eqref{eq: definition B}, one can see that it is sufficient to prove that $\mathbf{A}(\Xt) = \begin{pmatrix} A_{11} & A_{12}(\Xt) \\ A_{21}(\Xt) & A_{22} \end{pmatrix}$ is one-to-one (it is in fact equivalent). Indeed, suppose that $\mathbf{A}(\Xt)$ is one-to-one and let $\mathbf{W}\in \mathbb{R}^{N+2}$ be such that $B(\mathbf{X}) \mathbf{W}=0$. By direct computation, setting $\mathbf{V}=-A_{11}^{-1} A_{12}(\Xt)\mathbf{W} $, we obtain $\mathbf{A}(\Xt) (\mathbf{V},\mathbf{W})^\intercal =0$, which implies $(\mathbf{V},\mathbf{W})=0$. In particular, $\mathbf{W}=0$ which proves that $\mathbf{B}(\Xt)$ is one-to-one.

So, let us now prove that $\mathbf{A}(\Xt)$ is indeed one-to-one. We take $\mathbf{V} = (V_i)_i \in \mathbb{R}^{2N}$ and $\mathbf{W} = ( W_i)_i \in \mathbb{R}^{N+2}$ such that 
    \begin{equation*}
       \mathbf{A}(\Xt)\begin{pmatrix}
            \mathbf{V} \\ \mathbf{W}
        \end{pmatrix} = 0_{3N+2}\,
        \label{eq: a11 a22}
    \end{equation*}
that we rewrite as
 \begin{align}
            A_{11}\mathbf{V}+A_{12}(\Xt)\mathbf{W} & =0_{2N}\,, 
            \label{eq: systNX-1}
            \\
            A_{21}(\Xt)\mathbf{V} +A_{22}\mathbf{W}& =0_{N+2}\,,
        \label{eq: systNX-2}
    \end{align} 
and prove that, necessarily, $\mathbf{V} = 0_{2N}$ and $\mathbf{W}= 0_{N+2}$. 
First, notice that the last two lines in \eqref{eq: systNX-2} imply $V_1 = V_2 = 0$. Then we compute
\begin{align*}
    0 & = \mathbf{W}^\intercal(A_{21}(\Xt)\mathbf{V} +A_{22}\mathbf{W})\\
    & = \sum\limits_{i=1}^N h\, W_i \begin{pmatrix}  -\sin\theta_i \\ \cos \theta_i \end{pmatrix} \cdot \frac{V_{i+1} + V_i}{2} +   \frac{h}{2}\,W_N\begin{pmatrix}  -\sin \theta_N \\ \cos \theta_N \end{pmatrix} \cdot V_N - \sum\limits_{i=1}^N \frac{h^3}{12}c_{\perp}W_i^2\,.
  \end{align*}
 
 Using $G(\Xt)$ as introduced in equation \eqref{eq:G}, and the fact that $V_1 = V_2 = 0$, one has, by summation by parts,
  \begin{align*}
    0 &  = \frac{h}{2}  \sum\limits_{i=2}^N  V_i \cdot\left(W_{i-1} \begin{pmatrix}  -\sin \theta_{i-1} \\ \cos \theta_{i-1} \end{pmatrix} + W_i \begin{pmatrix}  -\sin \theta_i \\ \cos \theta_i \end{pmatrix}\right)- \sum\limits_{i=1}^N \frac{h^3}{12}c_{\perp}W_i^2\\
    & = \sum\limits_{i=2}^N  V_i \cdot ((G(\Xt)\mathbf{W})_{i}-(G(\Xt)\mathbf{W})_{i-1})- \sum\limits_{i=1}^N \frac{h^3}{12}c_{\perp}W_i^2\\
    & = - \sum\limits_{i=1}^N (V_{i+1} - V_i)\cdot(G(\Xt)\mathbf{W})_i- \sum\limits_{i=1}^N \frac{h^3}{12}c_{\perp}W_i^2.
\end{align*}
Using \eqref{eq: systNX-1}, we also have that for $1 \leq i \leq N$, $V_{i+1}-V_i = -h (\mathbb{C}(\Xt)G(\Xt)\mathbf{W})_i$ and we recover
\begin{equation}
    0 =  \sum\limits_{i=1}^N h (\mathbb{C}(\Xt)G(\Xt)\mathbf{W})_i\cdot(G(\Xt)\mathbf{W})_i- \sum\limits_{i=1}^N \frac{h^3}{12}c_{\perp}W_i^2.
    \label{eq: def pos}
\end{equation}
The matrix $\mathbb{C}(\Xt)$ is block diagonal with negative definite blocks, so both terms in \eqref{eq: def pos} are negative, which then yields $W_i = 0$ and $(G(\Xt)\mathbf{W})_i = 0$ for all $i\in \{1,\cdots,N\}$. Moreover, from \eqref{eq:G}, we deduce that
$$
0=(G(\Xt)\mathbf{W})_1=\left(\begin{array}{c}W_{N+1}\\W_{N+2}\end{array}\right) + \frac{h}{2}\begin{pmatrix} -\sin \theta_1 \\ \cos  \theta_1 \end{pmatrix} V_1 = \left(\begin{array}{c}W_{N+1}\\W_{N+2}\end{array}\right)\,,
$$
which finishes to prove that $\mathbf{W}=0_{N+2}$.
Finally, using \eqref{eq: systNX-1}, we deduce that $\mathbf{V}=0_{2N}$, which means that $\mathbf{A}(\Xt)$, and hence $B(\Xt)$, are both invertible.

We can now apply Cauchy-Lipschitz theorem to equation \eqref{eq: BtildeF}, and conclude that for any initial condition, System \eqref{eq: Nlink_bis_energ}  admits a unique $C^1$ solution $\Xt=(\theta_1,\, \dots,\, \theta_N, r_1)$ locally in time.\\    

\noindent\textbf{Global existence and uniqueness.}
The solution $\Xt=(\theta_1,\, \dots,\, \theta_N, r_1)$ to System \eqref{eq: Nlink_bis_energ} can be extended to $\mathbb{R}_+$ as long as it does not blow up in finite time, which is ensured by the following lemma.

\begin{lemma}[Bounds on $(r_i)_i$ and $(\theta_i)_i$] 
    For any $T>0$, assume that we can define all  $(r_i)_{1\leq i \leq N+1}$ and $(\theta_i)_{1\leq i \leq N}$ on $[0, T[$. Then, they stay bounded in $[0,T]$ in the following sense:
    \begin{eqnarray}
    \vert \theta_i(t) - \theta_i(0) \vert &\leq& C_1T^{1/2},
    \label{eq:estimtheta}
    \\
    \vert r_i(t) - r_i(0) \vert &\leq& C_2 T^{1/2},
    \label{eq:estimr}
    \end{eqnarray}
    for all $1 \leq i \leq N$, for all $t\in [0,T]$, and where $C_1, C_2>0$ are constants that do not depend on $T$. 
    \label{thm: bounds_rh}
\end{lemma}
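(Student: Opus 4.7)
The plan is to bootstrap entirely from the energy identity \eqref{eq: energie} of Proposition \ref{thm: energy}. Integrating \eqref{eq: energie} on $[0,t]$ with $t\le T$, using that $-\C(\theta_i)$ is symmetric positive definite with smallest eigenvalue at least $c_\star := \min(\ct,\cn)>0$, and using the assumption \eqref{eq: hyp_theta0} on the initial bending energy, I obtain
\begin{equation*}
\frac{E}{2}\sum_{i=1}^{N-1} h\left(\frac{\theta_{i+1}(t)-\theta_i(t)}{h}\right)^2 + \int_0^t \sum_{i=1}^N \frac{h^3}{12}\cn \dot\theta_i(\tau)^2\,\d\tau + \int_0^t \sum_{i=1}^N h\,c_\star\,|\dot r_{i+\frac12}(\tau)|^2\,\d\tau \;\leq\; C^0_h.
\end{equation*}
In particular, for each fixed $i\in\{1,\dots,N\}$, both $\int_0^T \dot\theta_i^2$ and $\int_0^T |\dot r_{i+1/2}|^2$ are bounded by constants that depend on $N$, $E$, $c_\star$, $\cn$ and $C^0_h$, but \emph{not} on $T$.

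For \eqref{eq:estimtheta}, since $\theta_i$ is of class $C^1$ on $[0,T[$, I write $\theta_i(t)-\theta_i(0)=\int_0^t \dot\theta_i(\tau)\,\d\tau$, and Cauchy--Schwarz immediately gives
\begin{equation*}
|\theta_i(t)-\theta_i(0)| \leq T^{1/2}\Big(\int_0^T \dot\theta_i(\tau)^2\,\d\tau\Big)^{1/2} \leq T^{1/2}\sqrt{\tfrac{12\,C^0_h}{h^3 \cn}} =: C_1 T^{1/2}.
\end{equation*}
This is the desired bound with a constant $C_1$ depending on $N$ (through $h$) but independent of $T$.

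For \eqref{eq:estimr}, I first recover a bound on $\dot r_1$. Using the identity $r_{i+1}-r_i = h\,\eit$, differentiating yields $\dot r_{i+1}-\dot r_i = h\,\dot\theta_i\,\ein$; hence for every $i$,
\begin{equation*}
\dot r_{i+\frac12} = \tfrac{1}{2}(\dot r_i + \dot r_{i+1}) = \dot r_1 + h\sum_{k=1}^{i-1}\dot\theta_k\,e_{k,\perp} + \tfrac{h}{2}\dot\theta_i\,e_{i,\perp}.
\end{equation*}
Taking $i=1$ gives $\dot r_1 = \dot r_{3/2} - \tfrac{h}{2}\dot\theta_1 e_{1,\perp}$, so $\|\dot r_1\|_{L^2(0,T)}$ is controlled by $\|\dot r_{3/2}\|_{L^2(0,T)}$ and $\|\dot\theta_1\|_{L^2(0,T)}$, both already bounded uniformly in $T$. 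Substituting back in the formula above, $\|\dot r_i\|_{L^2(0,T)}$ is bounded by a constant depending only on $N$ and the data. Then Cauchy--Schwarz once more gives
\begin{equation*}
|r_i(t)-r_i(0)| \leq \int_0^t |\dot r_i(\tau)|\,\d\tau \leq T^{1/2}\|\dot r_i\|_{L^2(0,T)} \leq C_2 T^{1/2},
\end{equation*}
with $C_2$ independent of $T$. There is no genuine obstacle here: the only mildly delicate point is isolating $\dot r_1$ from the energy estimate (which bounds the half-integer velocities $\dot r_{i+1/2}$, not $\dot r_1$ directly) before propagating the control to all vertices via the explicit kinematic formula \eqref{eq: Ri}.
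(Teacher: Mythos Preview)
Your proof is correct and follows essentially the same strategy as the paper: integrate the energy identity, use Cauchy--Schwarz in time to control $|\theta_i(t)-\theta_i(0)|$, and then reconstruct $\dot r_i$ from the energy-controlled quantities $\dot r_{i+1/2}$ and $\dot\theta_i$. The only cosmetic difference is that the paper bounds all $\dot r_i$ simultaneously via $\sum_i h|\dot r_i|^2 \lesssim \sum_i h|\dot r_{i+1/2}|^2 + \sum_i h^3\dot\theta_i^2$, whereas you first isolate $\dot r_1$ and then propagate along the chain; both are the same idea, and your reference to \eqref{eq: hyp_theta0} is harmless here since the lemma only requires constants independent of $T$ (not of $h$), and you only use the quantity $C_h^0$ itself.
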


\begin{proof} Let $T>0$ and assume that the solution $(r_i)_{1\leq i \leq N+1}$ and $(\theta_i)_{1\leq i \leq N}$ exists on $[0, T[$.
First of all, integrating  equation \eqref{eq: energie} over time $t\in [0,T]$, we obtain
\begin{equation}
\begin{array}{cc}
     \displaystyle \frac{1}{2}\left[E \sum\limits_{i=1}^{N-1} h \left(\frac{\theta_{i+1}(T)-\theta_i(T)}{h}\right)^2\right] + \int_0^T\sum\limits_{i=1}^N \frac{h^3}{12}c_{\perp} \dot \theta_i^2(t)\d t\hspace*{4cm}\\
     \displaystyle \hspace*{4cm}-  \int_0^T\sum\limits_{i=1}^N h\, \C(\theta_i)\,\dot r_{i+\frac12}(t) \cdot \dot r_{i+\frac12}(t)\d t = C^0_h, 
    \end{array}
    \label{eq: int_energie}
\end{equation}
where the terms on the left-hand side are all positive, and $C^0_h = \frac{1}{2}\left[E \sum\limits_{i=1}^{N-1} h \left(\frac{\theta_{i+1}(0)-\theta_i(0)}{h}\right)^2\right]>0$ is a constant that depends on the initial condition and on $h$, but not on $T$. 

To prove \eqref{eq:estimtheta}, we write
\begin{eqnarray*}
    \sum_{i=1}^{N }\vert \theta_i(t) - \theta_i(0) \vert &=& \sum_{i=1}^{N } \left| \int_0^t \dot \theta_i(u) \d u\right| \leq  \sum_{i=1}^{N } \int_0^T \left| \dot \theta_i(u)\right| \,\d u
    \\
    &\leq& \sum\limits_{i=1}^{N} T^{1/2} \left(\int_0^T  \vert \dot \theta_i(u)\vert^2\d u\right)^{1/2}
   \leq (NT)^{1/2} \left(\sum_{i=1}^{N} \int_0^T \left| \dot \theta_i(u)\right|^2 \d u \right)^\frac12 \\
   &\leq&  (NT)^{1/2} \left(\frac{12}{h^3 \cn}C_h^0\right)^\frac12,
\end{eqnarray*}
where the last inequality comes from~\eqref{eq: int_energie}.

It now remains to prove~\eqref{eq:estimr}. First, we write similarly
\begin{equation}
    \sum_{i=1}^{N } \vert r_i(t) - r_i(0) \vert = (NT)^{1/2} \left(\sum_{i=1}^{N} \int_0^T \left| \dot r_i(u)\right|^2 \d u \right)^\frac12.
\label{eq: ri minus r0}
\end{equation}

To bound the right-hand side of \eqref{eq: ri minus r0}, we proceed in two steps. On the one hand, the matrix $\C(\theta_i)$ is negative definite for all $(\theta_i)$, with eigenvalues $(-\ct,-\cn)$ independent of $i$, so that from~\eqref{eq: int_energie} we have
\begin{equation}
    \int_0^T \sum\limits_{i=1}^N h\,\vert \dot r_{i+1/2}(u) \vert^2 \d u\leq \tilde C_h^0,
    \label{eq: borne_ridemi}
\end{equation}
where $\tilde C_h^0 = C_h^0/\min (\ct,\cn)$.

On the other hand, using \eqref{eq: Ri}, one can write $\dot r_{i+1}- \dot r_i = h\, \dot \theta_i \ein$. Hence, from~\eqref{eq: int_energie} we obtain
 \begin{equation}
     \displaystyle \int_0^T\sum\limits_{i=1}^N h \,\vert \dot r_{i+1}(u)-\dot r_i(u) \vert^2 \d u= \displaystyle \int_0^T\sum\limits_{i=1}^N h\, \vert h \,\dot \theta_i(u) \ein \vert^2 \d u= \int_0^T \sum\limits_{i=1}^N h^3 \dot \theta_i^2(u) \d u \leq \frac{12}{\cn}C_h^0.
     \label{eq: diff_ri}
 \end{equation}
 
Combining equations \eqref{eq: borne_ridemi} and \eqref{eq: diff_ri} finally leads to 
\begin{equation}
\int_0^T \sum\limits_{i=1}^{N+1} h \vert \dot r_i(u)\vert^2 \d u \leq 4\tilde C_h^0 + \frac{12}{\cn}C_h^0.
\label{eq: borne_ridot}
\end{equation}
which concludes the proof.
\end{proof}

Lemma \ref{thm: bounds_rh} guarantees that the solution $(\theta_1,\, \dots,\, \theta_N, r_1)$ to system \eqref{eq: Nlink_bis_energ} can not blow up in finite time. The system \eqref{eq: Nlink_bis_energ} with initial conditions $(\theta_1^0, \, \dots,\, \theta_N^0, r_1^0) \in \mathbb{R}^{N+2}$ therefore admits a unique $C^1$ solution for all time, which concludes the proof of Theorem \ref{thm: discrete well-posedness}.\\

\begin{remark}
For pinned (resp. clamped) boundary conditions, the system and the proof can be adapted by removing  $(r_1^x,r_1^y)$ (resp. $(r_1^x,r_1^y,\theta_1$)) from the unknowns and by looking at a new matrix in~\eqref{eq: matrixAXF}, of size $4N \times 4N$ (resp. $(4N-1) \times (4N-1)$).
\end{remark}

\subsection{Proof of Theorem \ref{thm: convergence}}\label{sec: convergence}

In this section, we prove Theorem \ref{thm: convergence}: namely, that the discrete solution to the N-link model computed from Theorem \ref{thm: discrete well-posedness} converges towards the solution to the continuous model.

To begin, we assume that $r^0\in C^2(0,L)$ is given together with a representative $\theta^0\in C^1(0,L)$  such that $r^0_s = (\cos \theta^0, \, \sin \theta^0)^\intercal \in C^1(0,L)$.

Let $T>0$. For all $N \in \mathbb{N}^*$, a set of discrete initial conditions $\Xt^0=(\theta_1^0, \, \dots,\, \theta_N^0,\, r_1^{x,0},\, r_1^{y,0}) \in \mathbb{R}^{N+2}$ is given from which the solutions of the discrete problem \eqref{eq: Nlink_bis_energ} is computed on $[0,T]$ thanks to Theorem \ref{thm: discrete well-posedness}. The corresponding interpolants $r^h,m^{h,z}, n^h, \bar\theta^h, \theta^h$ and $\bar m^{h,z}$ are then defined from (\ref{eq: interp_r_1} -- \ref{eq: interp_r_2}).

Furthermore we assume that \eqref{eq: cvCI} and \eqref{eq: hyp_theta0} hold, namely
$$
r^h(0,\cdot) \xrightarrow[N \rightarrow + \infty]{} r^0 \text{ in } L^2(0,L), 
$$
$$
\frac E 2 \|\theta_s^h(0,\cdot)\|_{L^2(0,L)} = \frac E 2 \sum\limits_{i=1}^{N-1} h \left(\frac{\theta_{i+1}(0)-\theta_i(0)}{h}\right)^2 = C^0_h \leq C^0  
$$
where $C^0$ does not depend on $h$.

The proof is split into three propositions. In Proposition \ref{thm: global_bounds}, we bound the interpolates independently of $h$ in suitable function spaces. Then, Proposition \ref{thm: extraction} establishes the existence of a limit to each of these interpolates as $h$ goes to zero. Finally, Proposition \ref{thm: formefaible} shows that this limit is a solution of System \eqref{eq: continuous} in a weak sense and the proof is concluded proving that the limit satisfy System \eqref{eq: continuous} almost everywhere in $Q_T$, with the initial condition $r^0$. \\

Let $BV(0,L)$ be defined as the space of functions of bounded variation on $[0,L]$, equipped with the norm $\Vert u \Vert_{BV(0,L)} = \| u\|_{L^1(0,L)} + TV_0^L(u)$, for $u \in BV(0,L)$, where $TV_0^L(u)$ is the total variation of $u$ on $(0,L)$ \cite{ambrosio2000functions}.

\begin{proposition}[Bounds on interpolates]
    If $(\theta^h_s(0,\cdot))_h$ is bounded uniformly $h$ in $L^2(0,L)$, the interpolates defined in equations (\ref{eq: interp_r_1}-\ref{eq: interp_r_2}) satisfy the following bounds, uniformly in h.
    \begin{enumerate}
    \item $(r^h)_h$ is bounded in $H^1(Q_T)$;
    \item $(\dot r^h_s)_h$ is bounded in $L^2(0,T; H^{-1}(0,L))$;
    \item $(r^h_s)_h$ is bounded in $L^2(0,T; BV(0,L))$;
    \item $(n^h)_h$ is bounded in $L^2(0,T; H^1(0,L))$;
    \item $(m^{h,z})_h$ is bounded in $L^2(0,T; H^1(0,L))$;
    \item $(h \, \dot{\bar{\theta}}^h)_h$ is bounded in $L^2(Q_T)$;
    \item $(\theta^h_s)_h$ is bounded in $L^2(Q_T)$.
    \end{enumerate}
    \label{thm: global_bounds}
\end{proposition}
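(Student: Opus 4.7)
The entire proof is driven by the integrated energy identity~\eqref{eq: int_energie}, whose right-hand side $C^0_h$ is uniformly bounded by $C^0$ thanks to hypothesis~\eqref{eq: hyp_theta0}. Since $-\C(\theta)$ is positive definite with spectrum bounded away from zero, every term on the left of~\eqref{eq: int_energie} is nonnegative, and we extract three master $h$-uniform estimates:
\begin{equation*}
\text{(A)}\ \int_0^T \sum_{i=1}^N h\,|\dot r_{i+\frac12}|^2\,\d t \le C,\quad \text{(B)}\ \int_0^T \sum_{i=1}^N h^3 \dot\theta_i^2\,\d t \le C,\quad \text{(C)}\ \sup_{t\in[0,T]}\|\theta^h_s(t,\cdot)\|_{L^2(0,L)}^2 \le \tfrac{2C^0}{E}.
\end{equation*}
Estimates (B) and (C) directly give claims (6) and (7). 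The formula $m_i^z = (E/h)(\theta_i - \theta_{i-1})$, applied to the piecewise linear interpolate, gives $\|m^{h,z}(t,\cdot)\|_{L^2(0,L)}^2 \le 2E^2\,\|\theta^h_s(t,\cdot)\|_{L^2(0,L)}^2$, so (C) controls the $L^2$ part of claim (5).

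For claim (1), inextensibility gives $|r^h_s|=1$ almost everywhere. Combining (A) and (B) through $\dot r_{i+1} - \dot r_i = h\,\dot\theta_i\,\ein$ produces $\int_0^T \sum_i h\,|\dot r_i|^2\,\d t \le C$, and a direct computation on the hat basis $(\phi_i)$ gives $\|\dot r^h\|_{L^2(Q_T)}^2 \le \int_0^T \sum_i h\,|\dot r_i|^2\,\d t$, hence a uniform bound. Then $\|r^h\|_{L^2(Q_T)}$ is controlled by writing $r^h(t,\cdot) = r^h(0,\cdot) + \int_0^t \dot r^h(u,\cdot)\,\d u$ and applying Cauchy--Schwarz together with~\eqref{eq: cvCI}. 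Claim (2) is immediate from $\|\partial_s f\|_{H^{-1}(0,L)} \le \|f\|_{L^2(0,L)}$. For claim (3), the piecewise constant $r^h_s(t,\cdot)$ has pointwise modulus $1$ and total variation $\sum_{i=1}^{N-1}|e_{i+1,\parallel}-\eit| \le \sum_i |\theta_{i+1}-\theta_i| \le \sqrt{L}\,\|\theta^h_s(t,\cdot)\|_{L^2(0,L)}$ by a discrete Cauchy--Schwarz and since $(N-1)h \le L$, uniformly bounded by (C).

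Claims (4) and (5) rely on the balance equations~\eqref{eq: Force balance}--\eqref{eq: Moment balance}. The force balance rewrites as $n^h_s(t,s) = -\C(\theta_i)\,\dot r_{i+\frac12}$ on $L_i$, so $\|n^h_s(t)\|_{L^2(0,L)}^2 \le C\sum_i h\,|\dot r_{i+\frac12}|^2$, integrable in $t$ by (A). Iterating the force balance from $n_1 = 0$ yields $n_i = -h\sum_{k=1}^{i-1}\C(\theta_k)\,\dot r_{k+\frac12}$; Cauchy--Schwarz and summation in $i$ give $\sum_i h\,|n_i|^2 \le C\,L^2\sum_k h\,|\dot r_{k+\frac12}|^2$, so the interpolate bound $\|n^h(t)\|_{L^2(0,L)}^2 \le 2\sum_i h\,|n_i|^2$ closes claim (4) via (A). For claim (5), the moment balance rearranges as $(m^{h,z}_s)|_{L_i} = -\ein\cdot\tfrac{n_{i+1}+n_i}{2} + \tfrac{h^2}{12}\cn\dot\theta_i$; squaring, summing with weight $h$ and integrating in time, the first term is bounded by $\int_0^T \sum_i h\,|n_i|^2\,\d t$ (just controlled) and the second by $\tfrac{h^2\cn^2}{144}\int_0^T \sum_i h^3\dot\theta_i^2\,\d t$, handled by (B).

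The most delicate step is the $L^2(Q_T)$ bound on $n^h$ itself in claim (4): iterating the force balance expresses $n_i$ as a sum of $i-1$ terms, which naively introduces an $O(N)$ factor and precludes a uniform bound on any individual $|n_i|$. The compensation appears only at the level of the weighted sum $\sum_i h\,|n_i|^2$, where the double summation contributes a purely geometric factor $(Nh)^2 = L^2$ independent of $h$, leaving only the dissipation quantity (A) to control. A similar cancellation makes the $\tfrac{h^2}{12}\cn\dot\theta_i$ remainder in the moment balance negligible as $h\to 0$, consistent with its interpretation as a hydrodynamic correction with no continuous counterpart.
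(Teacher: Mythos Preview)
Your proof is correct and follows the same overall architecture as the paper: extract the three master estimates (A), (B), (C) from the integrated energy identity, then propagate them through the balance laws to control $n^h$ and $m^{h,z}$.

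The tactical choices differ in a few places. For the $L^2(Q_T)$ bound on $r^h$, the paper splits $r^h = \bar r^h + \delta r^h$ into spatial mean plus fluctuation, controls $\delta r^h$ by Poincar\'e--Wirtinger, and bounds the drift of $\bar r^h$ in time; you instead integrate $\dot r^h$ in time from $r^h(0,\cdot)$ and invoke~\eqref{eq: cvCI}. For the $L^2$ parts of $n^h$ and $m^{h,z}$, the paper observes that both interpolates vanish at $s=L$ and uses Poincar\'e to reduce everything to the already-established bounds on $n^h_s$ and $m^{h,z}_s$; you instead iterate the force balance explicitly from $n_1=0$ and use the constitutive law $m_i^z = (E/h)(\theta_i-\theta_{i-1})$ directly. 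The paper's route is slightly cleaner---in particular it sidesteps the double-sum cancellation you flag as ``most delicate'' by replacing it with a one-line Poincar\'e argument---while your route is more self-contained and makes the mechanism of the $L^2$ compensation visible. Both are entirely valid.
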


\begin{proof}
\noindent \textbf{Point 1. Bounding $(r^h)_h$ in $H^1(Q_T)$.} 
Since $r^h$ is the piecewise linear interpolate of the $(r_i)_{1\leq i\leq N+1}$, it comes that
\begin{equation*}
    r^h_s(t,s) = \displaystyle 
 \sum\limits_{i=1}^{N} \frac{r_{i+1}(t)-r_i(t)}{h} \mathbbm{1}_{L_i}(s)\,.
 \label{eq: rh bound}
\end{equation*} 
Then, the inextensibility condition \eqref{eq: inext} implies that $\vert r^h_s\vert = \vert \frac{r_{i+1}-r_i}{h}\vert =1$ which ensures that $(r^h_s)_h$ is bounded in $L^2(Q_T)$. 

Then, we bound $\dot r^h $ in $L^2(Q_T)$. Direct calculations show that 
 \begin{eqnarray*}
     \Vert \dot r^h \Vert_{L^2(Q_T)}^2  &=& \int_0^T\sum\limits_{i=1}^N \int_{L_i} \left| \dot r_i(t) + s \frac{\dot r_{i+1}(t)-\dot r_i(t)}{h}\right|^2 \d s\,\d t\nonumber\\
     &\leq& \displaystyle 2\int_0^T\sum\limits_{i=1}^N \int_{L_i} \left| \dot r_i(t)\right|^2 \d s\,\d t+ 2 \int_0^T\sum\limits_{i=1}^N \int_{L_i} s^2 \frac{(\dot r_{i+1}(t)-\dot r_i(t))^2}{h^2}\d s\,\d t\nonumber\\
      &\leq& \displaystyle2 \int_0^T\sum\limits_{i=1}^N h \,\left| \dot r_i(t) \right|^2\,\d t\, + \frac{2}{3}\int_0^T\sum\limits_{i=1}^N  h^3 \dot \theta_i^2(t)\,\d t\,.
 \label{eq: bound_rdoth}
 \end{eqnarray*}
This gives the bound using \eqref{eq: borne_ridot} for the first term and (\ref{eq: int_energie},\ref{eq: hyp_theta0}) for the second one. 

It now remains to bound $r^h$ in $L^2(Q_T)$. 
To do so, we write $r^h(t,s) = \bar r^h (t)+ \delta r^h(t,s)$, with $ \bar r^h(t) = \frac{1}{L} \int_0^L r^h(t, s)\d s$. Then, the Poincaré-Wirtinger inequality yields
\begin{equation*}
    \Vert \delta r^h \Vert_{L^2(Q_T)} =  \Vert r^h - \bar r^h \Vert_{L^2(Q_T)} \leq C \Vert r^h_s \Vert_{L^2(Q_T)},
\end{equation*}
which is bounded uniformly in $h$. Then, to bound $\bar r^h$ in $L^2(Q_T)$, we write
\begin{equation*}
    \vert \bar r^h(t) - \bar r^h(0) \vert \leq \int_0^T \vert \dot{ \bar{ r}}^h \vert \leq T^{1/2} \left( \int_0^T \vert \dot{\bar{r}}^h\vert^2\right)^{1/2},
\end{equation*}
and using the $L^2(Q_T)$ orthogonal decomposition $\dot r^h  = \dot{\bar{r}}^h + \dot {\delta r}^h$, we obtain
\begin{equation*}
    \vert \bar r^h(t) - \bar r^h(0) \vert \leq \left(\frac T L \right)^{1/2} \|\dot r^h\|_{L^2(Q_T)} \leq C,
\end{equation*}
which concludes the proof.\\

\noindent\textbf{Point 2. Bounding $(\dot r^h_s)_h$ in $L^2(0,T; H^{-1}(0,L))$.} 
Since $(\dot r^h)_h$ is bounded in $L^2(Q_T)$ (Point 1. above), we immediately deduce that $(\dot r^h_s)_h$ is bounded in $L^2(0,T;H^{-1}(0,L))$ because, for all $u \in L^2(0,L)$, $\Vert u_s \Vert_{H^{-1}(0,L)} \leq \Vert u \Vert_{L^2(0,L)}$. \\

\noindent \textbf{Point 3. Bounding $(r^h_s)_h$ in $L^2(0,T; BV(0,L))$.} 
Since $r_s^h$ is piecewise constant, if we define $r^h_{s,i}$ as the value of $r_s^h$ on $[(i-1)h,ih]$ we have
\begin{equation*}
    TV_0^L(r^h_s(t, \, \cdot)) = \sum\limits_{i=1}^{N-1} \vert r^h_{s,i+1}(t)  - r^h_{s,i}(t) \vert.
\end{equation*}  
Recalling that $\eit(t) = \frac{r_{i+1}(t)-r_i(t)}{h} = r^h_{s,i}$ and $|r^h_s(t,s)|=1$ for any $(t,s)\in Q_T$,  we get 
\begin{equation*}
    \Vert r^h_s(t, \, \cdot)\Vert_{BV(0,L)}  = \|r^h_s(t,\cdot)\|_{L^1(0,L)} + TV_0^L(r^h_s(t, \, \cdot)) = L + \sum\limits_{i=1}^{N-1} \vert e_{i+1, \parallel}(t) - \eit(t)\vert.
\end{equation*}
Then, using Cauchy-Schwarz inequality and the fact that $\theta \to (\cos\theta, \sin\theta)^\intercal$ is $1$-Lipschitz, it comes that
\begin{eqnarray*}
     \Vert r^h_s(t, \, \cdot)\Vert_{BV} \leq L + \sum\limits_{i=1}^{N-1} \vert \theta_{i+1}(t)-\theta_i(t)\vert &\leq& L + \sqrt{N-1} \left( \sum\limits_{i=1}^{N-1} \vert \theta_{i+1}(t)-\theta_i(t) \vert^2\right)^{1/2}\\
     &\leq& L + \sqrt L \left( \sum\limits_{i=1}^{N-1} h \frac{\vert \theta_{i+1}(t)-\theta_i(t) \vert^2}{h^2}\right)^{1/2}
\end{eqnarray*}
which is bounded from (\ref{eq: int_energie},\ref{eq: hyp_theta0}) and from which we deduce the $L^2(0,T; BV(0,L))$ bound.\\

\noindent \textbf{Point 4. Bounding $(n^h)_h$ in $L^2(0,T; H^1(0,L))$.} First, using the boundary condition $n^h(t,L) =0$, it is sufficient to prove that $(n^h_s)_h$ is bounded in $L^2(Q_T)$.
But, using the force balance of \eqref{eq: Nlink_bis_energ}, we notice that 
\begin{eqnarray*}
    \Vert n^h_s(t,\, \cdot) \Vert_{L^2(0,L)}^2 &=& \sum\limits_{i=1}^N h\, \left \vert \frac{n_{i+1}(t)-n_i(t)}{h} \right \vert^2\\
    &=&\sum\limits_{i=1}^N h\,\left \vert \C(\theta_i(t)) \dot r_{i+1/2}(t)\right \vert^2.
\end{eqnarray*}

Integrating over $[0,T]$, we get
\begin{equation*}
    \Vert n^h_s \Vert_{L^2(Q_T)}^2=  \int_0^T \sum \limits_{i=1}^N  h \left \vert \C(\theta_i(t)) \dot r_{i+1/2}(t)\right \vert^2\mathrm{d} t.
\end{equation*}
Using again (\ref{eq: int_energie},\ref{eq: hyp_theta0}) and the fact that $\C(\theta)$ is bounded, we obtain that $(n^h_s)_h$ is bounded in $L^2(Q_T)$ uniformly in $h$. \\

\noindent \textbf{Point 5. Bounding $(m^{h,z})_h$ in $L^2(0,T; H^1(0,L))$.} As for $n^h$ before, since, for all $t>0$, $m^{h,z}(t,L)=0$, it is sufficient to prove that $(m^{h,z}_s)_h$ is bounded in $L^2(Q_T)$. Let us write
\begin{equation}
    \Vert m^{h,z}_s(t, \, \cdot) \Vert_{L^2(0,L)}^2 = \sum\limits_{i=1}^N h \left| \frac{m_{i+1}^z(t)-m_i^z(t)}{h}\right|^2= \sum\limits_{i=1}^N h \left| \frac{m_{i+1}(t)-m_i(t)}{h}\right|^2.
    \label{eq: mhs}
\end{equation}
Then, using the moment balance from the system of equations \eqref{eq: Nlink_bis_energ}, we also have that
\begin{equation}
    \left| \frac{m_{i+1}(t)-m_i(t)}{h}\right| \leq \left| \frac{n_{i+1}(t)+n_i(t)}{2}\right| + \frac{h^2}{12}\cn \vert \dot \theta_i(t)\vert.
    \label{eq: ITmhs}
\end{equation}
Combining equations \eqref{eq: mhs} and \eqref{eq: ITmhs} leads to
\begin{equation}
    \Vert m^{h,z}_s(t, \, \cdot) \Vert_{L^2(0,L)}^2 \leq 2\sum\limits_{i=1}^N h \left| \frac{n_{i+1}(t)+n_i(t)}{2}\right|^2 + 2\sum\limits_{i=1}^N h  \frac{h^4}{144}\cn^2 \vert \dot \theta_i(t)\vert^2.
    \label{eq mhs+Itmhs}
\end{equation}
Moreover, one can notice that 
\begin{align}
    \Vert n^h(t, \, \cdot)\Vert_{L^2(0,L)}^2 &= \sum\limits_{i=1}^N \int_{-h/2}^{h/2} \left| \frac{n_i(t) + n_{i+1}(t)}{2}+\frac{2s}{h} \frac{n_{i+1}(t)-n_i(t)}{2}\right|^2\d s\nonumber\\
    &\geq \sum\limits_{i=1}^N h \left|\frac{n_i(t)+n_{i+1}(t)}{2}\right|^2.
    \label{eq: norm_nh}
\end{align}
Using equation \eqref{eq: norm_nh} into equation \eqref{eq mhs+Itmhs} and integrating over time then gives
\begin{equation*}
    \Vert m^{h,z}_s \Vert_{L^2(Q_T)}^2
    \leq 2 \int_0^T\Vert n^h(t, \, \cdot)\Vert_{L^2(0,L)}^2\d t + \int_0^T\frac{2 h^2}{144}\cn^2 \sum\limits_{i=1}^N h^3 \vert \dot \theta_i (t)\vert^2\d t,
\end{equation*}
which is bounded uniformly in $h$, by virtue of Point 4. and Equations (\ref{eq: int_energie},\ref{eq: hyp_theta0}).\\

\noindent\textbf{Point 6. Bounding $(h \, \dot{\bar{\theta}}^h)_h$ in $L^2(Q_T)$.} 
We have 
\begin{equation*}
    \int_0^T \sum\limits_{i=1}^N \int_{L_i} (h\,  \dot{\bar{\theta}}^h(t,s))^2\d s\,\d t = \int_0^T\sum\limits_{i=1}^N h^3\,  \dot \theta_i^2(t)\,\d t,
    \label{eq: bound_deltasdottheta}
\end{equation*}
which is again bounded from (\ref{eq: int_energie},\ref{eq: hyp_theta0}).\\

\noindent\textbf{Point 7. Bounding $(\theta^h_s)_h$ in $L^2(Q_T)$.}
 
Writing $\theta^h_s(t,s) = \sum\limits_{i=1}^{N-1} \frac{\theta_{i+1}-\theta_i}{h}\mathbbm{1}_{L_{i+1}}$ we have
\begin{equation*}
    \Vert \theta^h_s \Vert_{L^2(Q_T)}^2  =  \int_0^T \sum\limits_{i=1}^{N-1} h \left|\frac{\theta_{i+1}(t)-\theta_i(t)}{h} \right|^2 \d t
\end{equation*}
which is bounded from (\ref{eq: int_energie},\ref{eq: hyp_theta0}).
\end{proof}

\medskip

From the previous estimates, we can now establish the convergence of the interpolates.

\begin{proposition}[Convergent subsequences] \label{thm: extraction}
There exist $r\in H^1(Q_T)\, \cap\, L^2(0,T; H^2(0,L))$, $n\in L^2(0,T; H^1(0,L))$, $m^z\in L^2(0,T; H^1(0,L))$ and $\alpha \in L^2(Q_T)$ such that up to the extraction of a subsequence, as $h\to 0$:
\begin{enumerate}
    \item $(r^h)_h$ converges to $r$ weakly in $H^1(Q_T)$ and strongly in $L^2(Q_T)$;
    \item $(r^h_s)_h$ strongly converges to $r_s$ in $L^2(0,T; L^p(0,L))$ for all $1 \leq p < \infty$;  
    \item $(\hat r^h)_h$ converges to $r$ strongly in $L^2(0,T; H^1(0,L))$ and weakly in $L^2(0,T; H^2(0,L))$;
    \item $(n^h)_h$ weakly converges to $n$ in $L^2(0,T; H^1(0,L))$;
    \item $(m^{h,z})_h$ weakly converges to $m^z$ in $L^2(0,T; H^1(0,L))$;
    \item $(\bar m^{h,z})_h$ weakly converges to $m^z$ in $L^2(0,T; L^p(0,L))$ for all $1 \leq p < \infty$;
    \item $(h^2 \, \dot{\bar{\theta}}^h)_h$ strongly converges to $0$ in $L^2(Q_T)$;
    \item $(\theta^h_s)_h$ weakly converges to $\alpha$ in $L^2(Q_T)$. 
\end{enumerate}
\end{proposition}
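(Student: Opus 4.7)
The plan is to combine the uniform bounds of Proposition~\ref{thm: global_bounds} with weak compactness (Banach--Alaoglu), the Rellich--Kondrachov compact embedding, and an Aubin--Lions type argument for the strong convergence of the spatial derivatives. I expect the main difficulty to be Point~2, since the strong convergence of $r_s^h$ is crucial to pass to nonlinear limits later (such as $\C(r_s^h)\dot r^h$), and it requires combining spatial compactness in $BV(0,L)$ with temporal regularity in a negative Sobolev space.

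For the weak-convergence parts of Points~1, 4, 5 and 8 I would simply invoke the Banach--Alaoglu theorem in the corresponding reflexive Hilbert spaces, relying on Points~1, 4, 5 and 7 of Proposition~\ref{thm: global_bounds}. The strong $L^2(Q_T)$ convergence of $(r^h)_h$ completing Point~1 then follows from the Rellich--Kondrachov compact embedding $H^1(Q_T) \hookrightarrow L^2(Q_T)$. To handle Point~2 I would apply the Aubin--Lions lemma to $(r_s^h)_h$: by Point~3 of Proposition~\ref{thm: global_bounds} it is bounded in $L^2(0,T;BV(0,L))$, its time derivative is bounded in $L^2(0,T;H^{-1}(0,L))$ by Point~2, and since the one-dimensional embedding $BV(0,L) \hookrightarrow L^p(0,L)$ is compact for every finite $p$ and continuous into $H^{-1}(0,L)$, the lemma yields strong convergence in $L^2(0,T;L^p(0,L))$; the limit must equal $r_s$ thanks to the weak $H^1(Q_T)$ convergence of $r^h$.

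For Point~3, the identity $\hat r^h_{ss}=\theta^h_s(-\sin\theta^h,\cos\theta^h)^\intercal$ together with $|\hat r^h_s|=1$ and the bound on $(\theta^h_s)_h$ in $L^2(Q_T)$ shows that $(\hat r^h)_h$ is bounded in $L^2(0,T;H^2(0,L))$, so Banach--Alaoglu extracts a weakly convergent subsequence. To identify the limit with $r$, I would estimate $\hat r^h - r^h$ in $L^2(0,T;H^1(0,L))$: both functions agree at $s=0$, while the difference of their derivatives $(\cos\theta^h,\sin\theta^h)^\intercal - (\cos\bar\theta^h,\sin\bar\theta^h)^\intercal$ is controlled by $|\theta^h-\bar\theta^h|$, itself bounded on each $L_i$ by $Ch|\theta^h_s|$; integrating in $s$ yields $\|\hat r^h - r^h\|_{L^2(0,T;H^1)} \leq Ch \to 0$. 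Combined with the strong $L^2(0,T;H^1(0,L))$ convergence of $r^h$ to $r$ (which follows from Points~1 and 2 taking $p=2$), this identifies the weak $H^2$-limit as $r$ and upgrades the convergence of $\hat r^h$ to strong in $L^2(0,T;H^1(0,L))$.

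Finally, Points~6 and 7 follow from direct estimates. For Point~7, $\|h^2\dot{\bar\theta}^h\|_{L^2(Q_T)} = h\|h\dot{\bar\theta}^h\|_{L^2(Q_T)} \leq Ch$ by Point~6 of Proposition~\ref{thm: global_bounds}, which vanishes as $h\to 0$. For Point~6, $\bar m^{h,z}$ is the piecewise-constant counterpart of the piecewise-linear $m^{h,z}$ with identical nodal values, and a direct computation yields $\|\bar m^{h,z} - m^{h,z}\|_{L^2(Q_T)} \leq Ch\|m^{h,z}_s\|_{L^2(Q_T)} \to 0$. Combined with the uniform $L^2(0,T;L^\infty(0,L))$ bound coming from the one-dimensional Sobolev embedding $H^1(0,L)\hookrightarrow L^\infty(0,L)$ and interpolation, this yields strong convergence in $L^2(0,T;L^p(0,L))$ for every finite $p$, and in particular the stated weak convergence.
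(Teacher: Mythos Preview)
Your proposal is correct and follows essentially the same route as the paper's proof: Banach--Alaoglu and Rellich--Kondrachov for Points~1, 4, 5, 7, 8; the Aubin--Lions lemma with the compact embedding $BV(0,L)\hookrightarrow L^p(0,L)$ for Point~2; and direct comparison of $\hat r^h$ with $r^h$ (through $\|\theta^h-\bar\theta^h\|_{L^2}\leq Ch\|\theta^h_s\|_{L^2}$) and of $\bar m^{h,z}$ with $m^{h,z}$ for Points~3 and~6. Your treatment of Point~6 for $p>2$, invoking the embedding $H^1(0,L)\hookrightarrow L^\infty(0,L)$ and interpolation, is in fact more explicit than the paper's, which only spells out the $L^2(Q_T)$ case.
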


\begin{proof}
\textbf{Points 1., 4., 5., 7. and 8.} These points immediately follow from the bounds 1., 4., 5., 6. and 7. in Proposition \ref{thm: global_bounds}. Notice that Point 1. is obtained using also Rellich-Kondrachov theorem.\\

\noindent \textbf{Point 2. Convergence of $(r^h_s)_h$ in $L^2(0,T; L^p(0,L))$ for all $1 \leq p < \infty$.}
We know from Proposition \ref{thm: global_bounds}, that $(\dot r^h_s)_h$ is bounded in $L^2(0,T; H^{-1}(0,L))$ and $(r^h_s)_h$ is bounded in $L^2(0,T; BV(0,L))$. Let us recall that for all $1\leq p < \infty$, $BV(0,L)$ is compactly embedded in $L^p(0,L)$ \cite[Corollary 3.49]{ambrosio2000functions}. Then, from Aubin-Lions-Simon theorem \cite{aubin1963theoreme, boyer2012mathematical}, we  deduce that $(r^h_s)_h$ is relatively compact in $L^2(0,T; L^p(0,L))$.\\

\noindent \textbf{Point 3. Convergence of $(\hat r^h_s)_h$ in $L^2(0,T; H^1(0,L))$ and $L^2(0,T; H^2(0,L))$.}
Using equation \eqref{eq: Nlink_bis_energ}, we have
\begin{align*}
    \Vert \bar \theta^h - \theta^h\Vert_{L^2(Q_T)}^2 &= \int_0^T \sum\limits_{i=2}^{N} \int_0^{h} (\theta_i - \theta_{i-1})^2 \left(1- \frac{s}{h}\right)^2\d s\d t\\
    & = \int_0^T \sum\limits_{i=2}^{N}  \frac{h^3}{3E^2}  {m_{i+1}^z}^2\d t \\
    & \leq \frac{h^2}{3E^2}\Vert \bar m^{h,z} \Vert^2_{L^2(Q_T)}.
\end{align*}
But $(\bar m^{h,z})_h$ is bounded in $L^2(Q_T)$. Indeed, $(m^{h,z})_h$ is bounded in $L^2(Q_T)$ from Point 5. in Proposition \ref{thm: global_bounds}, and computing the difference   
\begin{align}
      \displaystyle \int_0^T \Vert \bar{m}^{h,z}(t,\cdot)-m^{h,z}(t,\cdot) \Vert_{L^2(0,L)}^2\d t &  \displaystyle = \int_0^T\sum\limits_{i=1}^N \int_0^{h} (m_i^z(t)(1-s/h)+m_{i+1}^z(t)s/h - m_{i}^z(t))^2\d s\,\d t  \nonumber\\[6pt]
         &\displaystyle = \int_0^T \sum\limits_{i=1}^N \int_0^{h} 1/E^2 (m_i^z(t)-m_{i+1}^z(t))^2(s/h)^2\d s\,\d t\nonumber\\[6pt]
         & \displaystyle \leq \frac{h}{3} \Vert m^{h,z} \Vert_{L^2(0,T;H^1(0,L))}^2,
    \label{eq: bornemi}
\end{align}
we obtain the claimed bound on $\bar m^{h,z}$. Hence, $\Vert \bar \theta^h - \theta^h\Vert_{L^2(Q_T)}$ goes to zero when $h \to 0$. Then, this also means that
\begin{equation*}
     \lim_{h \to 0} \Vert \hat r^h_s -  r^h_s \Vert_{L^2(Q_T)} = \lim_{h \to 0} \Vert (\cos \theta^h,\, \sin \theta^h)^\intercal - (\cos \bar \theta^h, \, \sin \bar \theta^h)^\intercal \Vert_{L^2(Q_T)} = 0.
\end{equation*}
Since, from Point 2., $(r^h_s)_h$ strongly converges towards $r_s$ in $L^2(Q_T)$, we also have that $(\hat r^h_s)_h$ strongly converges towards $r_s$ in $L^2(Q_T)$.
Using Point 7. in Proposition \ref{thm: global_bounds}, $(\theta^h_s)_h$ is uniformly bounded in $h$ in $L^2(Q_T)$, from which we get that $(\hat r^h_{ss})_h$ is uniformly bounded in $L^2(Q_T)$ as well, so it weakly converges towards a limit (up to extraction of a subsequence). Remembering that $\hat r^h_s \in L^2(0,T; H^1(0,L))$ converges towards $r_s$ in $L^2(Q_T)$, by uniqueness of the limit, $r_s$ is differentiable in $s$ and $\hat r^h_{ss}$ weakly converges towards $r_{ss}$ in $L^2(Q_T)$. \\

\noindent\textbf{Point 6. Convergence of $(\bar m^{h,z})_h$ in $L^2(Q_T)$.}
The convergence of $(\bar m^{h,z})_h$ towards $m^{z}$ follows from \eqref{eq: bornemi}, and the weak convergence of $(m^{h,z})_h$.\\

Of particular note, the extractions of subsequences may be done in a row, which means we can assume that the above-mentioned convergences are obtained for all sequences with the same extracted indices $(h_n)_{n\in \mathbb{N}}$ with $h_n \rightarrow 0$ as $n\rightarrow +\infty$.
\end{proof}

In order to prove Theorem \ref{thm: convergence}, it now remains to establish that the limits obtained in Proposition \ref{thm: extraction} are solutions of the continuous system \eqref{eq: continuous} in the sense of distributions:

\begin{proposition}[Limit equations]\label{thm: formefaible}
The limits $r \in H^1(Q_T)$, $m \in L^2(0,T; H^1(0,L))$ and $n \in L^2(0,T; H^1(0,L))$ of the interpolates satisfy the following system of equations: for any $\varphi \in C^{\infty}_c(0,L)$ and $\psi \in C^{\infty}_c(0,T)$,
\begin{subequations}\label{eq: continuous_weakform}
\begin{numcases}{}
\int_0^{T} \int_0^L \left(\C(r_s(t,s))\dot r(t,s) + n_s(t,s)\right)\varphi(s)  \psi(t) \,\d s\,\d t = 0,\label{eq: continuous_weakform_1}\\[6pt]
\displaystyle \int_0^{T} \int_0^L \left( m_s(t,s)  + r_s(t,s)\times n(t,s)\right) \varphi(s) \psi(t)\,\d s\, \d t = 0,\label{eq: continuous_weakform_2}\\[6pt]
\displaystyle \int_0^{T} \int_0^L m^z(t,s)\varphi(s)\psi(t)\,\d s\, \d t = \displaystyle E \int_0^{T} \int_0^L (r_{ss}(t,s) \times r_s(t,s))\cdot e_z \varphi(s)\psi(t)\,\d s\, \d t,\label{eq: continuous_weakform_3}\\[6pt]
 \displaystyle \left \vert r_s(t,s)\right\vert=1,\mbox{ for a.e. }(t,s)\in Q_T,\label{eq: continuous_weakform_4}\\[6pt]
 n(t,0) = n(t,L) = 0,\mbox{ for a.e. } t\in [0,T],\label{eq: continuous_weakform_5}\\[6pt]
 m(t,0) = m(t,L) =0,\mbox{ for a.e. } t\in [0,T].\label{eq: continuous_weakform_6}
\end{numcases}
\end{subequations}
\end{proposition}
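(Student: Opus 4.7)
The plan is to rewrite each equation of System \eqref{eq: Nlink_bis_energ} in terms of the interpolants (\ref{eq: interp_r_1}--\ref{eq: interp_r_2}), multiply by the separable test function $\varphi(s)\psi(t)$, integrate over $Q_T$, and pass to the limit $h \to 0$ using the convergences of Proposition~\ref{thm: extraction}. After elementary manipulation, the force and moment balance equations become
\[
    n_s^h + \C(\bar\theta^h)\, P^h = 0, \qquad m_s^{h,z} + (r_s^h \times \bar n^h)\cdot e_z - \frac{h^2}{12}\cn \,\dot{\bar\theta}^h = 0
\]
almost everywhere on $Q_T$, where $P^h$ and $\bar n^h$ are the \emph{piecewise-constant midpoint} functions defined by $P^h|_{L_i} = \dot r_{i+1/2}$ and $\bar n^h|_{L_i} = (n_i + n_{i+1})/2$. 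These quantities are not among the interpolants studied in Proposition~\ref{thm: extraction}, so the first task is to identify their weak $L^2(Q_T)$ limits.

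For $\bar n^h$, a direct computation gives $\|n^h - \bar n^h\|_{L^2(Q_T)}^2 = (h^2/12)\,\|n_s^h\|_{L^2(Q_T)}^2 \to 0$, hence $\bar n^h \rightharpoonup n$ in $L^2(Q_T)$. For $P^h$, the pointwise formula $\dot r^h - P^h = h\,\dot\theta_i\,\ein\,(\xi - 1/2)$ on $L_i$, with $\xi = (s-(i-1)h)/h$, only yields a uniform $L^2(Q_T)$ bound through the energy estimate $\int_0^T \sum_i h^3 \dot\theta_i^2\,\d t \leq C$. However, when testing against a smooth $\phi \in C_c^\infty(Q_T)$, the zero mean of $\xi - 1/2$ on $[0,1]$ produces an extra factor of $h$ via a Taylor expansion, so that Cauchy--Schwarz and the same energy bound give $\int_{Q_T} (P^h - \dot r^h)\,\phi \,\d s\,\d t = O(h)$. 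Combined with the uniform $L^2(Q_T)$ bound on $P^h$, this identifies $P^h \rightharpoonup \dot r$ in $L^2(Q_T)$.

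Since $r_s^h = (\cos \bar\theta^h, \sin \bar\theta^h)^\intercal \to r_s$ strongly in every $L^p(Q_T)$ with $p < \infty$ (Proposition~\ref{thm: extraction}, Point 2), the matrix $\C(\bar\theta^h)$ converges a.e.\ to $\C(r_s)$ with a uniform $L^\infty$ bound, and standard strong--weak product convergence yields $\C(\bar\theta^h) P^h \rightharpoonup \C(r_s)\dot r$ and $r_s^h \times \bar n^h \rightharpoonup r_s \times n$ in $L^2(Q_T)$. Together with $n_s^h \rightharpoonup n_s$, $m_s^{h,z} \rightharpoonup m_s^z$, and the strong convergence $h^2 \dot{\bar\theta}^h \to 0$ in $L^2(Q_T)$ (Proposition~\ref{thm: extraction}, Point 7), I would pass to the limit against $\varphi(s)\psi(t)$ to obtain \eqref{eq: continuous_weakform_1} and \eqref{eq: continuous_weakform_2}. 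For the constitutive relation \eqref{eq: continuous_weakform_3}, the identity $\bar m^{h,z} = E(\hat r^h_{ss} \times \hat r^h_s)\cdot e_z$, together with the strong convergence $\hat r^h_s \to r_s$ and the weak convergence $\hat r^h_{ss} \rightharpoonup r_{ss}$ in $L^2(Q_T)$ (Proposition~\ref{thm: extraction}, Point 3) and $\bar m^{h,z} \rightharpoonup m^z$ (Point 6), directly identifies the distributional limit.

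Inextensibility \eqref{eq: continuous_weakform_4} follows from $|r_s^h| = 1$ a.e.\ and the strong convergence $r_s^h \to r_s$ in $L^2(Q_T)$ (up to a subsequence, a.e.). For the boundary conditions \eqref{eq: continuous_weakform_5}--\eqref{eq: continuous_weakform_6}, the continuity, and hence the weak continuity, of the trace operator $H^1(0,L) \to \mathbb{R}$ at $s = 0$ and $s = L$, combined with the weak convergences of $n^h$ and $m^{h,z} e_z$ in $L^2(0,T; H^1(0,L))$, makes the traces of $n$ and $m$ at $s = 0$ and $s = L$ the weak $L^2(0,T)$ limits of the vanishing discrete traces, hence zero. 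The main obstacle is the identification of the weak limit of $P^h$: the raw energy estimate is insufficient to bound $P^h - \dot r^h$ strongly to zero in $L^2(Q_T)$, and the argument crucially relies on the zero-mean averaging gain of the factor $\xi - 1/2$ against smooth test functions.
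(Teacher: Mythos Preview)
Your argument is correct, but it takes a genuinely different route from the paper for the force and moment balance. You keep the test function $\varphi$ fixed and introduce the piecewise-constant midpoint interpolants $P^h|_{L_i}=\dot r_{i+1/2}$ and $\bar n^h|_{L_i}=(n_i+n_{i+1})/2$, then prove their weak $L^2(Q_T)$ convergence directly---the delicate point being $P^h$, where the energy estimate only bounds $\|P^h-\dot r^h\|_{L^2(Q_T)}$ and you must exploit the zero mean of $\xi-\tfrac12$ on each segment to gain the extra factor of~$h$ when tested against a smooth function. The paper instead leaves $\dot r^h$ and $n^h$ untouched and modifies the \emph{test function}: it replaces $\varphi$ by its piecewise-constant average $\bar\varphi^h(s)=\sum_i\bigl(\tfrac1h\int_{L_i}\varphi\bigr)\mathbbm{1}_{L_i}(s)$, so that on each $L_i$ the integral $\int_{L_i}\C(\theta_i)\dot r^h\,\bar\varphi^h\,\d s$ automatically returns $h\,\C(\theta_i)\dot r_{i+1/2}\cdot\bar\varphi^h_i$, matching the discrete equation exactly; passing to the limit then only requires the elementary fact $\bar\varphi^h\to\varphi$ in $L^\infty(0,L)$. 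The paper's trick is shorter and sidesteps the oscillatory-cancellation argument for $P^h$ entirely, while your approach has the merit of cataloguing the convergence of each discrete quantity in its own right, which can be reused elsewhere. The remaining parts (constitutive law via $\hat r^h$, inextensibility, boundary conditions) are handled the same way in both; for the boundary conditions the paper phrases it as $n^h,m^h\in L^2(0,T;H^1_0(0,L))$ being weakly closed, which is equivalent to your trace argument.
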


\begin{proof}
 In order to proceed, let $\varphi \in C^{\infty}_c(0,L)$  and $\psi \in C^{\infty}_c(0,T)$ be given. Notice that, defining $\bar \varphi^h$ by, for $s \in [0,L]$
\begin{equation*}
    \bar \varphi^h(s) = \sum\limits_{i=1}^N \frac{1}{h} \left(\int_{L_i} \varphi(u)\d u\right) \mathbbm{1}_{L_i}(s).
    \label{eq: barvarphi}
\end{equation*}
we can rewrite System \eqref{eq: Nlink_bis_energ} as
\begin{subequations}\label{eq: Nlink_interp_system}
\begin{numcases}{}
\int_0^{T} \int_0^L \left(\C(r_s^h(t,s))\dot r^h(t,s) + n^h_s(t,s)\right)\bar \varphi^h(s)  \psi(t) \,\d s\,\d t = 0, \label{eq: Nlink_interp_system_1}\\[6pt]
\displaystyle \int_0^{T} \int_0^L \left(\left(-\frac{h^2}{12}\cn \dot{\bar{\theta}}^h(t,s) e_z + m_s^h(t,s) \right)\varphi(s) + \left(r_s^h(t,s)\times n^h(t,s) \bar\varphi^h(s)\right)\right) \psi(t)\,\d s\, \d t = 0,\label{eq: Nlink_interp_system_2}\\[6pt]
\displaystyle \int_0^{T} \int_0^L \bar{m}^{h,z}(t,s)\varphi(s)\psi(t)\,\d s\, \d t = E \displaystyle \int_0^{T} \int_0^L (\hat r^h_{ss}(t,s) \times \hat r^h_s(t,s))\cdot e_z \varphi(s)\psi(t)\,\d s\, \d t,\label{eq: Nlink_interp_system_3}\\[6pt]
 \displaystyle \left \vert r^h_s(t,s)\right\vert=1,\mbox{ for a.e. }(t,s)\in Q_T,\label{eq: Nlink_interp_system_4}\\[6pt]
 n^h(t,0) = n^h(t,L)= 0,\mbox{ for a.e. } t\in [0,T],\label{eq: Nlink_interp_system_5}\\[6pt]
 m^h(t,0)=  m^h(t,L) =0,\mbox{ for a.e. } t\in [0,T],\label{eq: Nlink_interp_system_6}
\end{numcases}
\end{subequations}
Then we pass to the limit $h\to 0$ in this discrete system.\\

First, notice that
\begin{equation}
\bar \varphi^h \rightarrow  \varphi \text{ strongly in } L^\infty(0,L).
\label{eq: cvPhi}
\end{equation}

Indeed, let $s\in [0,L]$ and $i$ such that $s\in L_i$. Then we have
\begin{eqnarray*}
    \left| \bar \varphi^h(s) - \varphi(s) \right|&=&\left| \frac 1 h \int_{L_i} (\varphi(u)-\varphi(s) ) \d u \right| \\
    &\leq&  \frac 1 h \int_{L_i} \left|\varphi(u)-\varphi(s) \right| \d u \\
    &\leq& h \|\varphi_s\|_\infty
\end{eqnarray*}
which proves the claim.

\noindent\textbf{Force balance.}
Since $(r_s^h)_h$ strongly converges to $r_s$ in $L^2(0,T; L^4(0,L))$, according to Point 2. in Proposition \ref{thm: extraction}, we deduce that
 $\C(r^h_s)$ strongly converges in $L^2(Q_T)$ to 
 $\C(r_s)$.
 Moreover, Points 1. and 4. in Proposition \ref{thm: extraction} also state that $(\dot r^h)_h$ weakly converges towards $\dot r$ in $L^2(Q_T)$, and $(n_s^h)_h$ weakly converges towards $n_s$ in $L^2(Q_T)$.
This is sufficient, using \eqref{eq: cvPhi}, to pass to the limit in equation \eqref{eq: Nlink_interp_system_1} and obtain \eqref{eq: continuous_weakform_1}.\\

\noindent\textbf{Moment balance.}
From Points 7. and 5. in Proposition \ref{thm: extraction}, we have that $h^2\dot{\bar{\theta}}^h$ strongly converges towards zero in $L^2(Q_T)$ and that $m^h_s$ also weakly converges to $m_s$ in $L^2(Q_T)$ respectively. The moment equation \eqref{eq: continuous_weakform_2} then follows from passing to the limit in \eqref{eq: Nlink_interp_system_2} using also \eqref{eq: cvPhi}.\\

\noindent\textbf{Inextensibility constraint.}
The limit $r_s \in L^2(0,T; L^4(0,L))$ of $r^h_s$ satisfies the inextensibility constraint \eqref{eq: continuous_weakform_4}, according to Point 2. in Proposition \ref{thm: extraction}, together with \eqref{eq: Nlink_interp_system_4}.\\

\noindent\textbf{Boundary conditions.}
We notice that both $(n^h)_h$ and $(m^h)_h$ are in $L^2(0,T;H^1_0(0,L))$ which is closed with respect to the weak $L^2(0,T;H^1(0,L))$ convergence. Therefore the weak limits $n$ and $m$ satisfy the boundary conditions \eqref{eq: continuous_weakform_5} and \eqref{eq: continuous_weakform_6}.\\

\noindent\textbf{Relation between $m$ and $\theta$.}
On the one hand, from Point 6. in Proposition \ref{thm: extraction} $(\bar m^{h,z})_h$ weakly converges to $m^z$ in $L^2(Q_T)$. On the other hand we know from Point 3. in Proposition \ref{thm: extraction} that 
\begin{eqnarray*}
\hat{r}^h_s &\rightarrow& r_s\mbox{ strongly in }L^2(Q_T),\\
\hat{r}^h_{ss} &\rightharpoonup& r_{ss}\mbox{ weakly in }L^2(Q_T).
\end{eqnarray*}
This is sufficient to pass to the limit in \eqref{eq: Nlink_interp_system_3}.
\end{proof}

In order to conclude the proof of Theorem \ref{thm: convergence}, we first remark that from the variational formulation \eqref{eq: continuous_weakform}, the equations of the continuous system \eqref{eq: continuous} are satisfied in a $L^2(Q_T)$ sense and therefore for a.e. $(s,t)\in Q_T$. It then remains to show that the initial conditions can also be deduced. But, we know that $r^h$ converges weakly to $r$ in $H^1(Q_T)$ which allows to deduce that $ r^h(0,\cdot)$ weakly converges to $r(0,\cdot)$ in $L^2(0,L)$ which, from \eqref{eq: cvCI}, gives $r(0,\cdot)=r^0$.

\begin{remark}
Note that Propositions \ref{thm: global_bounds}, \ref{thm: extraction} and \ref{thm: formefaible} do not use any assumption on boundary conditions in $s=0$. Therefore, they remain valid throughout regardless of choice of free, pinned or clamped boundary conditions.
\end{remark}

\subsection{Proof of Corollary~\ref{corollary}}

In this section we prove Corollary~\ref{corollary}, establishing the existence of a global solution to the continuous problem in suitable spaces. Let $r^0\in C^2(0,L)$ be given together with a representative $\theta^0\in C^1(0,L)$  such that $r^0_s = (\cos \theta^0, \, \sin \theta^0)^\intercal$.

Let $N\geq 1$ be an integer. From Theorem~\ref{thm: convergence}, it is sufficient to construct an initial condition $\Xt=(\theta_1, \ldots, \theta_N, r_1^{x,0},r_1^{y,0})$ for the $N$-link system such that the corresponding interpolant converges to $r^0$ in the sense of \eqref{eq: cvCI} and which satisfies the bound \eqref{eq: hyp_theta0}. 

In order to proceed, we consider $r^0_1=r^0(0)$, $\theta^0_i =\frac{1}{h}\int_{L_i} \theta^0(u)\d u$ and $r_i^0, 2 \leq i\leq N$ from \eqref{eq: Ri}. Let  $r^{h,0}$ and $\bar \theta^{h,0}$ be the corresponding interpolants given by \eqref{eq: interp_r_1} and \eqref{eq: interp_theta}.

On the one hand, we have 
\begin{align*}
     \Vert \theta^0 - \bar \theta^{h,0}  \Vert^2_{L^2(0,L)} &= \sum_{i=1}^N \int_{L_i} \vert \theta^0(s) - \theta_i^0 \vert^2 \d s\\
     & = \sum_{i=1}^N \int_{L_i} \bigg | \theta^0(s) - \frac{1}{h}\int_{L_i} \theta^0(u)\d u \bigg | ^2 \d s\\ 
     & \leq C h^2 \sum_{i=1}^N \int_{L_i} |\theta^0_s(s)| ^2 \d s\leq C' h^2.
     \label{eq: th0 - th0h}
\end{align*}
due to Poincaré-Wirtinger inequality and using $\theta^0\in C^1(0,L)$. Therefore, $\bar \theta^{h,0}$ converges strongly to $\theta^0$ in $L^2(0,L)$.

On the other hand, for the convergence of $r^{h,0}$, we first notice that $r^{h,0}-r^0$ vanish at 0 and satisfies
$$
|r_s^{h,0}-r^0_s| = |(\cos(\bar \theta^{h,0}) - \cos(\theta^0),\sin(\bar \theta^{h,0}) - \sin(\theta^0))^\intercal| \leq |\bar \theta^{h,0}-\theta^0|.
$$ 
This furnishes the bound
$$
\|r_s^{h,0}-r^0_s\|_{L^2(0,L)} \leq \|\bar \theta^{h,0}-\theta^0\|_{L^2(0,L)}
$$
which tends to 0 from the preceding calculation. This permits us to deduce that
$$
r^{h,0} \rightarrow r^0 \mbox{ strongly in } H^1(0,L),
$$
and therefore \eqref{eq: cvCI} holds.

We finish by proving the uniform bound \eqref{eq: hyp_theta0}. From the definition of $(\theta^0_i)_{1\leq i \leq N}$ we have for all $1\leq i\leq N-1$
\begin{eqnarray*}
    |\theta^0_{i+1}-\theta^0_i| &=& \frac{1}{h}\left|\int_0^h(\theta^0(ih+u)-\theta^0((i-1)h+u))\d u\right|\\
     &\leq& \frac{1}{h}\int_0^h \left|\theta^0(ih+u)-\theta^0((i-1)h+u)\right|\d u\\
    &\leq& h\|\theta^0_s\|_{L^\infty}
\end{eqnarray*}
which enables us to deduce
$$
\sum_{i=1}^{N-1}h\frac{|\theta^0_{i+1}-\theta^0_i|^2}{h^2} \leq Nh \|\theta^0_s\|^2_{L^\infty(0,L)} \leq L \|\theta^0_s\|^2_{L^\infty(0,L)}.
$$
Since $\theta^0\in C^1(0,L)$, we get the result.

\section{Discussion}\label{sec: discussion}

In this paper, we have addressed the mathematical validity of the $N$-link formulation for the elastohydrodynamics of a filament in a viscous flow. Theorems \ref{thm: discrete well-posedness}, \ref{thm: convergence} and Corollary \ref{corollary} respectively establish the existence and uniqueness of solutions to the $N$-link model, the convergence (up to extraction), in a weak sense, of \eqref{eq: Nlink_bis_energ} towards the classical filament elastohydrodynamics formulation \eqref{eq: continuous}, and the existence of a solution to the continuous system for smooth enough initial data. This constitutes a novel theoretical guarantee that the $N$-link model is a well-founded discretization of a continuous filament. 

The proofs of both theorems strongly rely on an energy dissipation formula \eqref{eq: energie}. The fact that this formula holds for System \eqref{eq: Nlink_bis_energ} and precisely leads to convergence underlines, in particular, the prevalence of the $N$-link model over more straightforward discretized versions of equation \eqref{eq: continuous}, such as ones stemming from a finite difference scheme on the filament arclength. 
Consequently, when numerically implementing filament dynamics, Theorem \ref{thm: convergence} justifies the use of an $N$-link filament as an equivalent system.

Further extensions of the discrete and continuous models to physically relevant cases and their potential difficulties to establish convergence are discussed in this section.

\paragraph{Non-local and non-Newtonian hydrodynamics.} It is worth recalling that the modeling of hydrodynamic interactions in both Systems \eqref{eq: continuous} and \eqref{eq: Nlink_bis_energ} are based on resistive force theory, which is a relatively coarse approximation retaining only local drag. 
Inclusion of nonlocal effects in the elastohydrodynamics equation may be challenging from the functional analysis point of view, as it requires to deal with integral terms in System \eqref{eq: Nlink_bis_energ}. 
On the other hand, the matrix formulation of the $N$-link model as stated in Equation \eqref{eq: matrixAXF} can handle the inclusion of nonlocal terms within the hydrodynamic matrix \cite{walker2020efficient}, while retaining the same structure. Then, establishing invertibility of the resistance matrix to extend the results of Theorem \ref{thm: discrete well-posedness} may still be tractable by following the same system reduction (Eq. \eqref{eq: BtildeF}).

Non-Newtonian fluids also constitute and important avenue of research for future extensions of this study, with many biological fluids exhibiting viscoelastic behaviors or complex rheologies \cite{spagnolie2023swimming}. Well-posedness for continuous elastohydrodynamics featuring an additional coupling modeling linear viscoelasticity was tackled by Ohm in \cite{ohm2024well}, whilst rigorous treatment of corresponding coarse-grained versions remains an open problem. 

\paragraph{Active filament.} Modeling internal activity within the curvature dynamics of the filament is a crucial step to fully describe the dynamics of biological filaments such as microswimmer flagella and cilia, rather than that of a passive elastic fiber for the present study. Activity is typically represented as an internal torque forcing, say $\tau$, which would appear in the moment equation (fifth line of System \eqref{eq: continuous}). By perturbative arguments, it is likely that well-posedness and convergence are preserved in the case of ``small enough'' $\tau$, following the proofs of Theorems \ref{thm: discrete well-posedness} and \ref{thm: convergence} and including suitable assumptions on the norm and regularity of $\tau$.  The general case for arbitrarily large $\tau$ is trickier, and probably does not admit more than local existence in time, as noted by Mori \& Ohm for continuous elastohydrodynamics \cite{mori_well-posedness_2023}.

\paragraph{Three-dimensional motion.} 
Whilst the flagellar waveform of many organisms, such as human sperm cells, is largely restricted to a two-dimensional space \cite{Lindemann}, other microswimmers like rodent spermatozoa \cite{Yanagimachi1970} and \textit{Escherichia coli} \cite{BERG1973} bacteria are known to exhibit out-of-plane deformation of their flagella.
Three-dimensional deformation of rods notoriously requires a more cumbersome description of the filament kinematics, with local deformation containing both bend and twist for inextensible filaments, and several more degrees of freedom in shear and compression in the general case of a Cosserat rod \cite{boyer2020dynamics}. 
In the context of 3D deformation of an elastic filament immersed in a Stokes flow, well-posedness of elastohydrodynamics equations in the lines of \cite[Theorem 1.1]{mori_well-posedness_2023} is currently being studied by Ohm \cite{ohm2025}.

Regarding modeling and simulation, three-dimensional coarse-grained models similar to the $N$-link swimmer with RFT-like approximations of the hydrodynamic drag are available \cite{garg2023slender,fuchter2023three} with the one developed by Walker \textit{et al.} \cite{walker2020efficient} being the most analogous formulation to the two-dimensional $N$-link model studied in the present paper. In particular, the dynamics of their model is governed by a matrix-vector system of equation with a similar structure to that of Equation \eqref{eq: matrixAXF}:
\begin{equation}
-\mathcal{B} \mathcal{A} \mathcal{Q} \dot \Theta = R,
\label{eq: N-link 3D}
 \end{equation}
where $\mathcal{B} \mathcal{A} \mathcal{Q}$ is square of dimension $(3N+3)\times(3N+3)$, and where blocks can be identified with the corresponding equation as in the $N$-link system (force or torque balance, constitutive equations). In this setting, $\mathcal{B}$ links the vector of torques and forces to a vector $R$, such that $-\mathcal{B}\begin{pmatrix}
    F & T \end{pmatrix}^\intercal = R$ represents the force and torque balance; while $\mathcal{A}$ links the vector of velocities $\dot X$ to the forces, and $\mathcal{Q}$ is a transition matrix from linear to angular velocities, i.e. $\mathcal{Q}\dot \Theta= \dot X$.
From there, one may hope to establish convergence of the solutions to \eqref{eq: N-link 3D} with the same strategy than in this paper: writing \eqref{eq: N-link 3D} as a fully differential equation to check well-posedness, obtaining bounds on interpolates in well-chosen functional spaces, and extracting convergent subsequences. 

\paragraph{Multiple filaments.}

Biological settings often feature several interacting filaments, whether it is seen in several monoflagellate organisms swimming together, multiple flagella beating in synchrony, or interconnected networks of actin filaments. 
In such cases, hydrodynamic interactions introduce coupling terms within the dynamics of each interacting filament, which may lead to synchronization, attraction and repulsion phenomena \cite{young2009hydrodynamic,du2019dynamics}.
To the best of our knowledge, coupled elastohydrodynamics equations have not been addressed from the point of view of well-posedness in the literature. Formally, in the continuous formulation of $M$ filaments interacting with each other through the surrounding fluid, coupling terms would appear as external forces $f$ and moments $l$ in System \eqref{eq: continuous antman} and within the boundary conditions, with one notable effect being the loss of translational and rotational invariance with respect to one filament's fixed frame. Similar additional terms would appear in the resistance matrices of $M$ coupled $N$-link filament models. Various approximations for these coupling hydrodynamics terms are available for modelling and simulation \cite{du2019dynamics}. From there, one could either consider each of the filaments as a single filament with ``unknown'' (that is, with no explicit expression available) source terms, hopefully possessing appropriate boundedness properties to preserve well-posedness; or study the whole coupled system of $M$ filaments -- a challenging task in that case being to check the invertibility of the as sociated resistance matrices. 

\printbibliography
\end{document}